\title{Characterizations, Dynamical Systems and Gradient Methods for 
Strongly Quasiconvex Functions}
\author{Felipe Lara\thanks{Instituto de Alta investigaci\'on (IAI),
Universidad de Tarapac\'a, Arica, Chile. E-mail:
felipelaraobreque@gmail.com; flarao@academicos.uta.cl. Web:
felipelara.cl, ORCID-ID: 0000-0002-9965-0921} \and
Ra\'ul T. Marcavillaca \thanks{Center for Mathematical Modeling (CMM),
Universidad de Chile, Santiago, Chile. E-mail: raultm.rt@gmail.com; rtintaya@dim.uchile.cl,
ORCID-ID: 0000-0003-3748-0768}
\and
Phan T. Vuong \thanks{School of Mathematical Sciences, University of
Southampton, SO17 1BJ, Southampton, United Kingdom.
E-mail: t.v.phan@soton.ac.uk, ORCID-ID: 0000-0002-1474-994X}}
\providecommand{\U}[1]{\protect \rule{.1in}{.1in}}
\newtheorem{theorem}{Theorem}
\newtheorem{corollary}[theorem]{Corollary}
\newtheorem{definition}[theorem]{Definition}
\newtheorem{lemma}[theorem]{Lemma}
\newtheorem{proposition}[theorem]{Proposition}
\newtheorem{remark}[theorem]{Remark}
\begin{document}

\maketitle

\begin{abstract}
\noindent %{\bf Abstract}
We study differentiable strongly quasiconvex func\-tions for providing new properties for algorithmic and monotonicity purposes. Furthemore, we provide insights into the 
de\-crea\-sing behaviour of strongly quasiconvex functions, applying this for esta\-bli\-shing exponential convergence for first- and second-order 
gra\-dient systems without relying on the usual Lipschitz continuity 
assump\-tion on the gradient of the function. The explicit discretization of the 
first-order dynamical system leads to the gradient descent method while 
discretization of the second-order dynamical system with viscous damping 
recovers the heavy ball method. We establish the linear convergence of both 
methods under suitable conditions on the parameters as well as comparisons 
with other classes of nonconvex functions used in the gradient descent 
literature.

\medskip

\noindent{\small \emph{Keywords}: Nonconvex optimization;
Quasiconvex function; Dynamical systems; Gradient descent; Heavy ball
method; Linear convergence.}
\end{abstract}

%\centerline{\today}

\section{Introduction}

Given a differentiable function $h: \mathbb{R}^{n} \rightarrow \mathbb{R}$,
we are interested in solving the following minimization problem:
\begin{equation}
 \min_{x \in \mathbb{R}^{n}} \, h(x),\notag
\end{equation}
numerically via gradient methods, who simplest version takes the form:
Take $x^{0} \in \mathbb{R}^{n}$ and compute
\begin{equation}\label{gradient:for:intro}
 x^{k+1} = \, x^{k} - \beta_{k} \nabla h(x^{k}), ~ \forall ~ k \geq 0,
\end{equation}
where $\{\beta_{k}\}_{k}$ is a sequence of positive stepsizes to be
chosen.

The gradient method and its accelerated versions are fundamental and outstanding methods for finding local minimum points of differentiable functions. These methods are particularly valuable for classes of functions where every local minimum is also a global minimum, such as in (strongly) convex functions. The convergence properties of gradient-type methods for solving (strongly) convex functions are well understood \cite{Nesterov-book, P2} (see also \cite{Ant, ACPR, BS} among others).

In order to extend the good convergence rate properties of the gradient-type
methods for classes of nonconvex functions, a natural step is to consider
classes of generalized convex functions as, for instance, classes of
quasiconvex functions having the same useful property that every local
minimum is global minimum (e.g., pseudoconvexity, strong quasiconvexity,
etc). A crucial step in achieving this is to establish a decreasing property based on the gradient of these generalized convex functions, similar to the case of strongly convex functions.

In the case of differentiable quasiconvex functions, the cornerstone result
is due to Arrow and Enthoven in \cite{AE} (see also \cite[Theorem 
3.11]{ADSZ}), in which they characterized differentiable quasiconvex 
functions via a decreasing property for all points belonging to a specific 
sublevel set of the function. As a consequence of this result, a differentiable 
function is quasiconvex (resp. pseudoconvex) if and only if its gradient is 
quasimonotone (resp. pseudomonotone), see \cite{CM-Book,HKS} for 
instance. However, these decreasing properties alone are insufficient to guarantee exponential convergence to a local solution in continuous gradient descent, or a linear convergence rate in gradient-type methods. This limitation is highlighted in results like \cite[Theorem 4.1]{Bolte}, \cite[Theorems 1 and 2]{GOM}, and \cite[Theorem 3.3]{PK}, where convergence to critical points was shown under a quasiconvexity assumption. Thus, a stronger form of quasiconvexity must be assumed.

A promising and natural extension of strongly convex functions that could ensure good convergence rates for gradient methods is the class of strongly quasiconvex functions, introduced by Polyak in \cite{P}. Members of this class possess the valuable property that every local minimum is also a global minimum. It is worth noting that every strongly convex function is strongly quasiconvex, although the reverse is not true in general. Despite this, no gradient methods had been developed for this class of nonconvex functions for a long time, probably because the existence of solutions to the problem of minimizing strongly quasiconvex functions was uncertain until the recent contribution \cite{Lara-9}, which provided a fresh perspective on this class of functions.

Therefore, a natural subsequent step is to determine whether gradient-type methods can ensure convergence to the unique solution of the minimization problem for a differentiable strongly quasiconvex function. If so, the next objective is to establish the convergence rate. This precisely defines the motivation for our research.\\

As a first contribution, we study 
di\-ffe\-ren\-tia\-ble quasiconvex and strongly quasiconvex functions via the 
behaviour of their gradients. Furthermore, these results
provide us a de\-crea\-sing pro\-perty which is almost as good as the one for strongly convex functions and a\-llow us to provide relationships between 
differentiable strongly quasiconvex functions with other classes of nonconvex 
functions used in gradient type me\-thods such as quasi-strongly convex \cite{NP} 
and functions for which the Polyak-{\L}ojasiewicz property holds 
(see \cite{L,P1}). \\

As a second contribution,  we leverage the decreasing property to establish exponential convergence for both continuous first- and second- order dynamical systems, without requiring Lipschitz continuity of the gradient. Finally, by considering the explicit discretization of these dynamical systems, we obtain the gradient descent method and the Polyak's heavy-ball method \cite{P2}, respectively for minimizing strongly quasiconvex functions. We establish the linear convergence
rate for both methods under suitable conditions on the parameters.\\

The structure of the paper is as follows: In Section \ref{sec:2}, we present
preliminaries and basic definitions regarding generalized convexity and nonsmooth analysis. In Section \ref{sec:3}, we study differentiable strongly quasiconvex functions via
the behaviour of their gradients and, as a consequence, a new generalized
monotonicity notion is proposed. In Section \ref{sec:4}, we provide exponential
convergence for the first order continuous steepest descent dynamical system
and linear convergence rate for the gradient method under a locally Lipschitz
assumption on the gradient of the function. Finally, in Section \ref{sec:5}, we
establish exponential convergence for the second order continuous dynamical
system by assuming a weaker assumption than Lipschitz continuity on the
gradient of the function and, furthermore, we show the linear convergence
rate for the corresponding Heavy ball method.

\section{Preliminaries}\label{sec:2}

The inner product in $\mathbb{R}^{n}$ and the Euclidean norm are
denoted by $\langle \cdot,\cdot \rangle$ and $\lVert \cdot \rVert$,
respectively. The set $]0, + \infty[$ is denoted by $\mathbb{R}_{++}$.
Given any $x, y, z \in \mathbb{R}^{n}$ and any $\beta \in \mathbb{R}$,
the following relations hold:
\begin{align}
  \lVert \beta x + (1-\beta) y \rVert^{2} &= \beta \lVert x \rVert^{2} +
  (1 - \beta) \lVert y\rVert^{2} - \beta(1 - \beta) \lVert x - y \rVert^{2},
 \label{iden:1}\\
 \langle x - z, y - x \rangle &= \frac{1}{2} \lVert z - y \rVert^{2} -
 \frac{1}{2} \lVert x - z \rVert^{2} - \frac{1}{2} \lVert y - x \rVert^{2}.
 \label{3:points}
\end{align}

Given any extended-valued function $h: \mathbb{R}^{n} \rightarrow
\overline{\mathbb{R}}$, the effective domain of $h$ is defined by
${\rm dom}\,h := \{x \in \mathbb{R}^{n}: h(x) < + \infty \}$. It is said
that $h$ is proper if ${\rm dom}\,h$ is nonempty and $h(x) > - \infty$ for
all $x \in \mathbb{R}^{n}$. The notion of properness is important when
dealing with minimization pro\-blems.

It is indicated by ${\rm epi}\,h := \{(x,t) \in \mathbb{R}^{n} \times
\mathbb{R}: h(x) \leq t\}$ the epigraph of $h$, by $S_{\lambda} (h) :=
\{x \in \mathbb{R}^{n}: h(x) \leq \lambda\}$ the sublevel set of $h$ at
the height $\lambda \in \mathbb{R}$ and by ${\rm argmin}_{
\mathbb{R}^{n}} h$ the set of all minimal points of $h$. A function
$h$ is lower se\-mi\-continuous at $\overline{x} \in \mathbb{R}^{n}$
if for any sequence $\{x_k\}_{k} \in \mathbb{R}^{n}$ with $x_k
\rightarrow \overline{x}$, $h(\overline{x}) \leq \liminf_{k \rightarrow +
\infty} h(x_k)$. Furthermore, the current convention $\sup_{\emptyset}
h := - \infty$ and $\inf_{\emptyset} h := + \infty$ is adopted.

A function $h$ with convex domain is said to be
\begin{itemize}
 \item[$(a)$] convex if, given any $x, y \in \mathrm{dom}\,h$, then
 \begin{equation}\label{def:convex}
  h(\lambda x + (1-\lambda)y) \leq \lambda h(x) + (1 - \lambda) h(y),
  ~ \forall ~ \lambda \in [0, 1],
 \end{equation}

 \item[$(b)$] strongly convex on ${\rm dom}\,h$ with modulus $\gamma
 \in \, ]0, + \infty[$ if for all $x, y \in \mathrm{dom}\,h$ and all
 $\lambda \in[0, 1]$, we have
 \begin{equation}\label{strong:convex}
  h(\lambda y + (1-\lambda)x) \leq \lambda h(y) + (1-\lambda) h(x) -
  \lambda (1 - \lambda) \frac{\gamma}{2} \lVert x - y \rVert^{2},
 \end{equation}

 \item[$(c)$] quasiconvex if, given any $x, y \in \mathrm{dom}\,h$, then
 \begin{equation}\label{def:qcx}
  h(\lambda x + (1-\lambda) y) \leq \max \{h(x), h(y)\}, ~ \forall ~
  \lambda \in [0, 1],
 \end{equation}

 \item[$(d)$] strongly quasiconvex on ${\rm dom}\,h$ with modulus
 $\gamma \in \, ]0, + \infty[$ if for all $x, y \in \mathrm{dom}\,h$
 and all $\lambda \in[0, 1]$, we have
 \begin{equation}\label{strong:quasiconvex}
  h(\lambda y + (1-\lambda)x) \leq \max \{h(y), h(x)\} - \lambda(1 -
  \lambda) \frac{\gamma}{2} \lVert x - y \rVert^{2}.
 \end{equation}
 \noindent It is said that $h$ is strictly convex (resp. strictly
 quasiconvex) if the inequa\-li\-ty in \eqref{def:convex} (resp.
 \eqref{def:qcx}) is strict whenever $x \neq y$.
 \end{itemize}
 The relationship between all these notions is summarized below (we
 denote quasiconvex by qcx):
 \begin{align}\label{scheme}
  \begin{array}{ccccccc}
  {\rm strongly ~ convex} & \Longrightarrow & {\rm strictly ~ convex} &
  \Longrightarrow & {\rm convex}  \notag \\
  \Downarrow & \, & \Downarrow & \, & \Downarrow  \notag \\
  {\rm strongly ~ qcx} & \Longrightarrow & {\rm strictly ~ qcx} &
  \Longrightarrow & {\rm qcx}
  \end{array}
 \end{align}
All the reverse statements do not hold in general (see 
\cite{ADSZ,CM-Book,HKS,Lara-9}).

%For instance, the function $h_{1}(x) = \sqrt{\lVert x \rVert}$ is strongly 
%quasiconvex on any bounded convex set (see \cite[Theo\-rem 17]{Lara-9}) 
%without being convex  and the function $h_{2} (x) = \frac{x}{1 + \lvert x 
%\rvert}$ is strictly quasiconvex without being strongly quasiconvex on 
%$\mathbb{R}$ while the other counter examples are well-known 

Before continuing, let us show some examples of strongly quasiconvex 
functions which are not convex.

\begin{remark}\label{rem:exam}
 \begin{itemize}
  \item[$(i)$] Let $h: \mathbb{R}^{n} \rightarrow \mathbb{R}$ be given 
   by $h(x) = \sqrt{\lVert x \rVert}$. Clearly, $h$ is nonconvex, but it is 
   strongly quasiconvex on any $\mathbb{B} (0, r)$, $r > 0$, with modulus 
   $\gamma = \frac{1}{5^{\frac{1}{4}} 2^{\frac{5}{4}} r^{\frac{1}{2}}}$ 
   by \cite[Theorem 17]{Lara-9}.

  \item[$(ii)$] Let $A, B \in \mathbb{R}^{n\times n}$ be two symmetric 
  matrices, $a, b \in \mathbb{R}^{n}$, $\alpha, \beta \in \mathbb{R}$, and 
  $h: \mathbb{R}^{n} \rightarrow \mathbb{R}$ be the function given by:
  \begin{equation}
   h(x) = \frac{f(x)}{g(x)} := \frac{\frac{1}{2} \langle Ax, x \rangle + \langle 
   a, x \rangle + \alpha}{\frac{1}{2} \langle Bx, x \rangle + \langle b, x 
   \rangle + \beta}.
  \end{equation}
  Take $0 < m < M$ and define:
  $$K := \{x \in \mathbb{R}^{n}: ~ m \leq g(x) \leq M\}.$$ 
   If $A$ is a positive definite matrix and at least one of the following 
   conditions holds:
  \begin{enumerate}
   \item[$(a)$] $B = 0$ (the null matrix),
 
   \item[$(b)$] $f$ is nonnegative on $K$ and $B$ is negative semidefinite,
 
   \item[$(c)$] $f$ is nonpositive on $K$ and $B$ is positive semidefinite,
 \end{enumerate}
  then $h$ is strongly quasiconvex on $K$ with modulus $\gamma =
  \frac{\lambda_{\min} (A)}{M}$ by \cite[Corollary 4.1]{ILMY}, where 
  $\lambda_{\min} (A)$ is the minimum eigenvalue of $A$.

  \item[$(iii)$] Let $h_{1}, h_{2}: \mathbb{R}^{n} \rightarrow \mathbb{R}$ 
  be two strongly quasiconvex functions with modulus $\gamma_{1}, 
  \gamma_{2} > 0$, respectively. Then $h := \max\{h_{1}, h_{2}\}$ is 
  strongly quasiconvex with modulus $\gamma := \min\{\gamma_{1}, 
  \gamma_{2}\} > 0$ (straightforward).

  \item[$(iv)$] Let $\alpha > 0$ and $h: \mathbb{R}^{n} \rightarrow \mathbb{R}$ 
  be a strongly quasiconvex functions with modulus $\gamma > 0$. Then $\alpha h$ is 
  strongly quasiconvex with modulus $\gamma \alpha > 0$ (straightforward).
 \end{itemize}
\end{remark}

Recently, it was proved in \cite{Lara-9} that any lsc strongly
quasiconvex function has an unique minimizer as we recall next:

\begin{lemma}\label{exist:unique} {\rm (\cite[Corollary 3]{Lara-9})}
 Let $K \subseteq \mathbb{R}^{n}$ be a closed and convex set and $h:
 \mathbb{R}^{n} \rightarrow \overline{\mathbb{R}}$ a proper, lsc,
 and strongly qua\-si\-con\-vex function on $K \subseteq {\rm dom}\,h$
 with modulus $\gamma> 0$. Then, ${\rm argmin}_{K} h$ is a singleton.
\end{lemma}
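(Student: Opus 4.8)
The plan is to treat uniqueness and existence separately, since the two require rather different arguments, and to dispatch uniqueness first because it is immediate.

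\emph{Uniqueness.} This part follows directly from the defining inequality \eqref{strong:quasiconvex}. Suppose, for contradiction, that $x_1, x_2 \in {\rm argmin}_K h$ with $x_1 \neq x_2$, and write $m := \min_K h = h(x_1) = h(x_2)$. Since $K$ is convex, the midpoint $z := \tfrac{1}{2}(x_1 + x_2)$ lies in $K$, and applying \eqref{strong:quasiconvex} with $\lambda = \tfrac{1}{2}$ gives $h(z) \leq m - \tfrac{\gamma}{8}\lVert x_1 - x_2 \rVert^2 < m$, contradicting the minimality of $m$. Hence ${\rm argmin}_K h$ has at most one element.

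\emph{Existence.} Here the strategy is to establish a coercivity-type property and then invoke the standard lower semicontinuity argument. First I would reduce the problem to showing that every nonempty set $S_\lambda(h) \cap K$ is bounded. Granting this, each such set is also closed (as $h$ is lsc and $K$ is closed), hence compact; taking a minimizing sequence $\{x_k\}_k \subseteq K$ with $h(x_k) \to \inf_K h$, all but finitely many iterates lie in one such compact sublevel set, so a subsequence converges to some $\bar x \in K$, and lower semicontinuity yields $h(\bar x) \leq \liminf_k h(x_k) = \inf_K h$. The same reasoning, applied along the minimizing sequence, simultaneously rules out $\inf_K h = -\infty$ (which would contradict properness), so that $\bar x$ is the desired minimizer.

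The crux, and the main obstacle, is the boundedness of the sublevel sets, which is where strong quasiconvexity does the real work. Suppose $S_\lambda(h) \cap K$ were unbounded for some $\lambda$; fix $x_0$ in it and pick $x_k \in S_\lambda(h) \cap K$ with $d_k := \lVert x_k - x_0 \rVert \to +\infty$. For a fixed radius $r > 0$ and all large $k$, set $t_k := r/d_k \in \,]0,1[$ and $z_k := (1 - t_k)x_0 + t_k x_k \in K$, so that $\lVert z_k - x_0 \rVert = r$. Applying \eqref{strong:quasiconvex} with $x = x_0$, $y = x_k$ and $\lambda = t_k$, and using $t_k d_k = r$, gives
\[
 h(z_k) \leq \max\{h(x_0), h(x_k)\} - t_k(1 - t_k)\frac{\gamma}{2} d_k^2 \leq \lambda - \frac{\gamma}{2}\, r\,(d_k - r).
\]
Since $d_k \to +\infty$, the right-hand side tends to $-\infty$, so $h(z_k) \to -\infty$. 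But the points $z_k$ all lie on the compact set $\{z : \lVert z - x_0 \rVert = r\} \cap K$, so a subsequence converges to some $\bar z \in K$, and lower semicontinuity then forces $h(\bar z) = -\infty$, contradicting the properness of $h$. This contradiction establishes the required boundedness and completes the argument.
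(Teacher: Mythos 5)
The paper does not prove this lemma; it is imported verbatim from \cite[Corollary 3]{Lara-9} (with the boundedness of sublevel sets being \cite[Theorem 1]{Lara-9}, which the paper also uses later in Remark \ref{loc:lips}). Your argument is correct and self-contained, and it follows the same strategy as that source: the midpoint inequality $h(z)\leq m-\tfrac{\gamma}{8}\lVert x_1-x_2\rVert^2$ for uniqueness, and for existence the estimate $h(z_k)\leq\lambda-\tfrac{\gamma}{2}r(d_k-r)\to-\infty$ on the compact sphere $\{z:\lVert z-x_0\rVert=r\}\cap K$, which contradicts properness via lower semicontinuity and forces sublevel sets to be bounded, after which Weierstrass applies.
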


The literature on strongly quasiconvex functions has been increasing very fast in the last years and several kind of algorithms have been developed for them (see \cite{GLM-1,ILMY,KL-2,Lara-9,LM}).

A function $h: \mathbb{R}^{n} \rightarrow \mathbb{R}$ its said to be
$L$-smooth on $K \subseteq \mathbb{R}^{n}$,  with $L> 0$, if it is differentiable on
$K$ and
\begin{equation}\label{L:smooth}
 \lVert \nabla h(x) - \nabla h(y) \rVert \leq L \lVert x - y \rVert, ~
 \forall ~ x, y \in K.
\end{equation}

For $L$-smooth functions, a fundamental result is the descent lemma
\cite[Lemma 1.2.3]{Nesterov-book}, that is, if $h$ is an $L$-smooth
function on a convex set $K$ with value $L  > 0$, then for every
$x, y \in K$, we have
\begin{equation}\label{descent:lemma}
 \lvert h(y) - h(x) - \langle \nabla h(x), y - x \rangle \rvert \leq
 \frac{L}{2} \lVert x - y \rVert^{2}.
\end{equation}

Another relevant result is the sufficient decrease lemma, which is given
as follows: Suppose that $h\in C_{L}^{1,1}(\mathbb{R}^n)$ (continuously
differentiable function with $L$-Lipschitz gradient, $L > 0$). Then for any
$x \in \mathbb{R}^n$ and $\beta >0$, holds
 \begin{align}\label{eq:dec}
  h(x) - h(x - \beta \nabla\, h(x)) \geq \beta \left(1 - \frac{\beta L}{2}
  \right) \| \nabla\,h(x) \|^2.
 \end{align}

Given a nonempty set $C$ in $\mathbb{R}^{n}$ and a single-valued
operator $T: \mathbb{R}^{n} \rightarrow \mathbb{R}^{n}$, $T$ is said
to be:
\begin{itemize}
 \item[$(a)$] monotone on $C$, if for all $x, y \in C$, we have
 \begin{equation}\label{def:monotone}
  \langle T(y) - T(x), y - x \rangle \geq 0.
 \end{equation}

 \item[$(b)$] strongly monotone on $C$ with modulus $\gamma > 0$,
  if for all $x, y \in C$, we have
 \begin{equation}\label{def:stronglymon}
  \langle T(y) - T(x), y - x \rangle \geq \gamma \lVert y - x \rVert^{2}.
 \end{equation}

 \item[$(c)$] pseudomonotone on $C$, if for all $x,y\in C$, we have
 \begin{equation}\label{def:pseudomonotone}
  \langle T(y), x - y \rangle \geq 0 ~ \Longrightarrow ~ \langle T(x),
  y - x \rangle \leq 0.
 \end{equation}

 \item[$(d)$] strongly pseudomonotone on $C$ with modulus
 $\gamma > 0$, if for all $x, y \in C$, we have
 \begin{equation}\label{def:strongly:pseudomon}
  \langle T(y), x - y \rangle \geq 0 ~ \Longrightarrow ~ \langle T(x),
  y - x \rangle \leq - \gamma \lVert y - x \rVert^{2}.
 \end{equation}

 \item[$(e)$] quasimonotone on $C$, if for all $x,y\in C$, we have
 \begin{equation}\label{def:quasimonotone}
  \langle T(y), x - y \rangle > 0 ~ \Longrightarrow ~ \langle T(x), y -
  x \rangle \leq 0.
 \end{equation}

 \item[$(f)$] strongly quasimonotone on $C$ with modulus $\gamma
  > 0$, if for all $x, y \in C$, we have
 \begin{equation}\label{def:strongly:quasimon}
  \langle T(y), x - y \rangle > 0 ~ \Longrightarrow ~ \langle T(x), y - x
  \rangle \leq - \gamma \lVert y - x \rVert^{2}.
 \end{equation}
\end{itemize}
The relationship between all these notions is summarized below (we
 denote monotone by mon):
 \begin{align*}
  \begin{array}{ccccccc}
  {\rm strongly ~ monotone} & \Longrightarrow & {\rm strongly ~
pseudomonotone} & \Longrightarrow & {\rm strongly ~ quasimonotone}
\notag \\
  \Downarrow & \, & \Downarrow & \, & \Downarrow & \, & \, \notag \\
  {\rm monotone} & \Longrightarrow & {\rm pseudomonotone} &
  \Longrightarrow & {\rm quasimonotone} & \, & \, \notag
  \end{array}
 \end{align*}
while the reverse statements do not hold in general (see
\cite{CM-Book,HKS}).

For other results on smooth and nonsmooth analysis, generalized monotonicity, generalized
convexity and strong quasiconvexity we refer to
\cite{ADSZ,CM-Book,GLM-1,JO3,HKS,KL-2,Lara-9,rock-1980,P,VNC-2} and references therein.

\section{Differentiable Strongly Quasiconvex Functions}\label{sec:3}

We begin this section with an  important result, which characterizes
differentiable strongly quasiconvex functions. This result is mentioned in \cite[Theorem 1]{JO3} as a consequence of more general results from \cite{VNC-2}.

\begin{theorem}\label{char:gradient} {\rm (\cite[Theorems 2 and 6]{VNC-2})}
 Let $K \subseteq \mathbb{R}^{n}$ be a convex set and $h: K
 \rightarrow \mathbb{R}$ a differentiable function. Then $h$ is strongly
 quasiconvex with modulus $\gamma \geq 0$ if and only if for every
 $x, y \in K$, we have
 \begin{equation}\label{gen:char}
  h(x) \leq h(y) ~ \Longrightarrow ~ \langle \nabla h(y), x - y \rangle
  \leq -\frac{\gamma}{2} \lVert y - x \rVert^{2}.
 \end{equation}
\end{theorem}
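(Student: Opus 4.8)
The plan is to prove the two implications separately, in both cases reducing to the one-dimensional restriction $\varphi(t):=h(x+t(y-x))$ on $[0,1]$, where $x,y\in K$ are fixed with $x\neq y$ and $d:=\lVert x-y\rVert^{2}$. This $\varphi$ is differentiable with $\varphi'(t)=\langle\nabla h(x+t(y-x)),\,y-x\rangle$, so $\varphi(0)=h(x)$, $\varphi(1)=h(y)$, and the directional derivative in \eqref{gen:char} is recovered as a one-sided limit of difference quotients of $\varphi$. For the forward implication I would assume, without loss of generality, $h(x)\le h(y)$, so $\max\{h(x),h(y)\}=h(y)=\varphi(1)$. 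Strong quasiconvexity \eqref{strong:quasiconvex} then gives $\varphi(t)\le h(y)-t(1-t)\frac{\gamma}{2}d$ for all $t$, hence for $t<1$ the difference quotient satisfies $\frac{\varphi(t)-\varphi(1)}{t-1}\ge t\,\frac{\gamma}{2}d$. Letting $t\uparrow 1$ yields $\varphi'(1)=\langle\nabla h(y),y-x\rangle\ge\frac{\gamma}{2}d$, i.e. $\langle\nabla h(y),x-y\rangle\le-\frac{\gamma}{2}d$, which is exactly \eqref{gen:char}. This direction is routine.

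For the reverse implication I would first rewrite \eqref{gen:char} along the segment: taking as base point $x+t(y-x)$ and as test point $x+s(y-x)$, the condition $\varphi(s)\le\varphi(t)$ forces $(s-t)\varphi'(t)\le-\frac{\gamma}{2}(t-s)^{2}d$, that is $\varphi'(t)\ge\frac{\gamma}{2}(t-s)d$ when $s<t$ and $\varphi'(t)\le-\frac{\gamma}{2}(s-t)d$ when $s>t$. From this I would first establish plain quasiconvexity of $\varphi$: if $\varphi$ had an interior maximizer $t^{\ast}$ with $\varphi(t^{\ast})>\max\{\varphi(0),\varphi(1)\}$, then applying the two estimates at $t^{\ast}$ with $s=0$ and $s=1$ (both admissible, since $\varphi(0),\varphi(1)<\varphi(t^{\ast})$) gives $\frac{\gamma}{2}t^{\ast}d\le\varphi'(t^{\ast})\le-\frac{\gamma}{2}(1-t^{\ast})d$, whence $\frac{\gamma}{2}d\le 0$, a contradiction for $\gamma>0$. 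Thus $\varphi\le\max\{\varphi(0),\varphi(1)\}$ on $[0,1]$ and $\varphi$ is valley-shaped (non-increasing then non-decreasing), with some minimizer $\mu$.

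To upgrade this to the quantitative modulus I would assume $h(x)\le h(y)$ and bound $\varphi(\lambda)$ by integrating the sharp lower estimate for $\varphi'$: for $\lambda\ge\mu$ one has $h(y)-\varphi(\lambda)=\int_{\lambda}^{1}\varphi'(t)\,dt$. The crucial choice is, for each $t$ on the increasing branch, to compare $x+t(y-x)$ not with the minimizer but with the point $x+\ell(t)(y-x)$ on the opposite branch carrying the same value $\varphi(\ell(t))=\varphi(t)$ (the left endpoint of the sublevel interval $\{\varphi\le\varphi(t)\}$); this yields the sharp estimate $\varphi'(t)\ge\frac{\gamma}{2}(t-\ell(t))d$. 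Integrating it returns precisely the required term $\lambda(1-\lambda)\frac{\gamma}{2}d$ — in the extremal symmetric configuration $\ell(t)=1-t$ and $\int_{\lambda}^{1}(2t-1)\,dt=\lambda(1-\lambda)$ — giving $\varphi(\lambda)\le h(y)-\lambda(1-\lambda)\frac{\gamma}{2}d$; the range $\lambda\le\mu$ is handled symmetrically by integrating from the left endpoint with the upper estimate for $\varphi'$.

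The hard part will be this last step, namely obtaining the \emph{sharp} constant. Because \eqref{gen:char} is a first-order, \emph{conditional} inequality, a single application only controls a directional derivative, so the modulus must be recovered by integration; moreover the naive comparison against the minimizer loses a factor of two, which is why one must compare each point with the equal-level point $\ell(t)$ on the opposite side of the valley and verify the sharp estimate $\int_{\lambda}^{1}(t-\ell(t))\,dt\ge\lambda(1-\lambda)$, with the symmetric profile as the tight case. Finally, the degenerate case $\gamma=0$ reduces to the classical Arrow--Enthoven characterization of quasiconvexity: there the contradiction argument above only yields $\langle\nabla h(t^{\ast}),y-x\rangle=0$, and a separate standard argument must be invoked.
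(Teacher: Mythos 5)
Your forward direction is correct and is essentially the paper's own argument: plug the segment into the definition \eqref{strong:quasiconvex}, form the difference quotient, and pass to the limit at an endpoint (whether one sends $t\downarrow 0$ or $t\uparrow 1$ is immaterial).

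The reverse direction has a genuine gap at exactly the step you flag as ``the hard part,'' and it is not merely an omitted computation: the inequality you propose to verify, $\int_{\lambda}^{1}\bigl(t-\ell(t)\bigr)\,dt\ge\lambda(1-\lambda)$, does not follow from the information you have about $\ell$. On the ascending branch you only know that $\ell$ is non-increasing, that $\ell(t)\le\min\{t,\lambda\}$ for $t\ge\lambda$, and that $\ell(1)=0$ (since $\varphi(0)\le\varphi(1)$). A profile in which $\ell(t)$ stays near $\lambda$ on most of $[\lambda,1]$ and drops to $0$ only near $t=1$ makes the integral as small as roughly $(1-\lambda)^{2}/2$, which is strictly below $\lambda(1-\lambda)$ whenever $\lambda>1/3$. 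Excluding such profiles requires coupling the ascending branch with the forced rate of decrease on the descending branch (where \eqref{gen:char} gives $\varphi'(s)\le-\frac{\gamma}{2}(r(s)-s)d$ at the matched level), and your sketch contains no such coupling; declaring the symmetric valley to be the extremal configuration is precisely the assertion that needs proof. For comparison, the paper itself does not prove the theorem --- it cites Vladimirov--Nesterov--Chekanov --- and its (commented-out) argument avoids integration altogether: it argues by contradiction, splits into cases according to the position of $h(z_{0})$ relative to $h(x)$ and $h(y)$, and in the delicate case applies the Mean Value Theorem to write $h(y)-h(x_{t})=t\langle\nabla h(x_{\rho}),y-x\rangle$ for a \emph{single} $\rho\in\,]0,t[$, then applies \eqref{gen:char} at $x_{\rho}$ against the far endpoint $x$; since $\lVert x_{\rho}-x\rVert=(1-\rho)\lVert y-x\rVert$ and $1-\rho\ge 1-t$, the constant $t(1-t)$ appears with no loss. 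Replacing your integration against the equal-level point $\ell(t)$ by this one-point comparison against the endpoint is the missing idea. Your treatment of $\gamma=0$ by deferral to Arrow--Enthoven is acceptable.
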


From Theorem \ref{char:gradient} we recover the characterization given by Arrow and Enthoven in \cite{AE} ($\gamma = 0$).

\begin{corollary}\label{coro:AE}
 Let $K \subseteq \mathbb{R}^{n}$ be a convex set and $h: K
 \rightarrow \mathbb{R}$ a differentiable func\-tion. Then $h$ is
 quasiconvex if and only if for every $x, y \in K$, we have
 \begin{equation}\label{char:AE}
  h(x) \leq h(y) ~ \Longrightarrow ~ \langle \nabla h(y), x - y \rangle
 \leq 0.
 \end{equation}
\end{corollary}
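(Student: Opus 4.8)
The plan is to obtain this corollary as the boundary case $\gamma = 0$ of Theorem \ref{char:gradient}, which is stated for the whole range $\gamma \geq 0$ rather than only $\gamma > 0$. The entire argument amounts to verifying that both sides of the equivalence in that theorem specialize to the quasiconvex versions when $\gamma$ is set to zero, so no new technical work is required.

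First I would record that strong quasiconvexity with modulus $\gamma = 0$ is identical to quasiconvexity. Setting $\gamma = 0$ in the defining inequality \eqref{strong:quasiconvex} annihilates the penalty term $\lambda(1-\lambda)\frac{\gamma}{2}\lVert x - y\rVert^{2}$, and the surviving inequality is exactly \eqref{def:qcx}. Hence the hypothesis ``$h$ is strongly quasiconvex with modulus $\gamma = 0$'' in Theorem \ref{char:gradient} reads, verbatim, as ``$h$ is quasiconvex''.

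Next I would specialize the gradient characterization \eqref{gen:char} to $\gamma = 0$. Its right-hand side $-\frac{\gamma}{2}\lVert y - x\rVert^{2}$ then vanishes, and \eqref{gen:char} collapses precisely into the implication \eqref{char:AE},
\begin{equation*}
 h(x) \leq h(y) ~ \Longrightarrow ~ \langle \nabla h(y), x - y \rangle \leq 0.
\end{equation*}
Putting the two reductions together, the equivalence furnished by Theorem \ref{char:gradient} at $\gamma = 0$ is exactly the asserted equivalence between quasiconvexity of $h$ and condition \eqref{char:AE}.

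There is essentially no obstacle to overcome here. The only point deserving a line of care is to confirm that Theorem \ref{char:gradient} genuinely permits $\gamma = 0$ (it does, since it is stated for all $\gamma \geq 0$), so the value $\gamma = 0$ is admissible and the corollary follows by direct specialization without a separate argument.
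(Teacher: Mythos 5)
Your proposal is correct and is exactly the paper's route: the corollary is stated immediately after Theorem \ref{char:gradient} as the specialization $\gamma = 0$, with no further argument, since the penalty term in both \eqref{strong:quasiconvex} and \eqref{gen:char} vanishes and the theorem's hypothesis explicitly allows $\gamma \geq 0$. No gaps.
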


A straightforward sufficient condition for a differentiable strongly
quasiconvex function to be strongly convex is given below.

\begin{corollary}
 Let $K \subseteq \mathbb{R}^{n}$ be a convex set and $h: K \rightarrow
 \mathbb{R}$ a differentiable and strongly quasiconvex function with
 modulus $\gamma \geq 0$. If for every $x, y \in K$, we have
 \begin{equation}
  \langle \nabla h(x), x - y \rangle \geq 0, ~ \forall ~ x \in S_{h(y)} (h),
  \notag
 \end{equation}
 then $h$ is strongly convex with modulus $\gamma \geq 0$.
\end{corollary}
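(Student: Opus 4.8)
The plan is to reduce the claim to the strong monotonicity of the gradient and then invoke the standard first-order characterization of strong convexity. Recall that for a differentiable function $h$ on a convex set $K$, being strongly convex with modulus $\gamma$ is equivalent to the strong monotonicity of $\nabla h$ with the same modulus, i.e. $\langle \nabla h(y) - \nabla h(x), y - x \rangle \geq \gamma \|y-x\|^{2}$ for all $x, y \in K$; this is recovered by integrating $t \mapsto \langle \nabla h(x + t(y-x)) - \nabla h(x), y-x\rangle$ over $[0,1]$. Hence it suffices to establish this monotonicity inequality on $K$.

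First I would fix $x, y \in K$ and, since the target inequality is invariant under swapping $x$ and $y$, assume without loss of generality that $h(x) \leq h(y)$, so that $x \in S_{h(y)}(h)$. Now both properties apply to this ordered pair. On one hand, Theorem \ref{char:gradient} (the decreasing property of the strongly quasiconvex function $h$) gives $\langle \nabla h(y), x - y \rangle \leq -\frac{\gamma}{2}\|y-x\|^{2}$, equivalently $\langle \nabla h(y), y - x \rangle \geq \frac{\gamma}{2}\|y-x\|^{2}$. On the other hand, the standing assumption applied to $x \in S_{h(y)}(h)$ yields $\langle \nabla h(x), x - y \rangle \geq 0$, equivalently $\langle \nabla h(x), y - x \rangle \leq 0$. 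Subtracting the second from the first gives
\[
 \langle \nabla h(y) - \nabla h(x), y - x \rangle = \langle \nabla h(y), y-x\rangle - \langle \nabla h(x), y-x\rangle \geq \tfrac{\gamma}{2}\|y-x\|^{2},
\]
so $\nabla h$ is strongly monotone on $K$, and the equivalence above then delivers strong convexity. The whole argument is thus a direct combination of Theorem \ref{char:gradient} with the hypothesis.

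The step I expect to be the main obstacle is the bookkeeping of the modulus. The decreasing property supplies the full quadratic term $\frac{\gamma}{2}\|y-x\|^{2}$, but the extra assumption contributes only the nonnegative cross term $-\langle \nabla h(x), y - x\rangle \geq 0$ rather than a second quadratic term; consequently the combination above produces strong monotonicity, and hence strong convexity, with modulus $\frac{\gamma}{2}$. Matching the stated modulus $\gamma$ would require the assumption to furnish a further $\frac{\gamma}{2}\|y-x\|^{2}$ contribution (e.g. a strengthened form $\langle \nabla h(x), x-y\rangle \geq \frac{\gamma}{2}\|y-x\|^{2}$), so this constant is precisely the point to verify with care. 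Everything else reduces to the two one-line estimates coming from Theorem \ref{char:gradient} and from the hypothesis.
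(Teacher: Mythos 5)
The paper offers no proof of this corollary (it is labelled ``straightforward''), so there is nothing to compare line by line; your route --- reduce to strong monotonicity of $\nabla h$ and invoke the first-order characterization of strong convexity --- is surely the intended one, and each individual step is correct: the WLOG is legitimate because the monotonicity inequality is symmetric in $x$ and $y$, Theorem \ref{char:gradient} applies at $y$, and the hypothesis applies at $x\in S_{h(y)}(h)$. The concern you flag at the end is, however, a genuine gap in the proof of the statement \emph{as written}, and you have diagnosed it exactly: the combination yields
\[
\langle \nabla h(y)-\nabla h(x),\, y-x\rangle \;\geq\; \tfrac{\gamma}{2}\,\lVert y-x\rVert^{2},
\]
which, under the paper's own definition \eqref{def:stronglymon} and its own dictionary in Proposition \ref{gen:mon:rel} ($h$ is $\gamma$-strongly convex iff $\nabla h$ is $\gamma$-strongly monotone), is $\tfrac{\gamma}{2}$-strong monotonicity and hence $\tfrac{\gamma}{2}$-strong convexity. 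The hypothesis contributes only a sign, not a second quadratic term, so no rearrangement of this two-line argument recovers the stated modulus $\gamma$; either the conclusion should read $\gamma/2$, or the hypothesis must be strengthened to $\langle \nabla h(x),x-y\rangle\geq\frac{\gamma}{2}\lVert y-x\rVert^{2}$ as you suggest. This appears to be a defect of the statement rather than of your reasoning.

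One further observation, relevant to whether the full modulus could be rescued by a different argument: the hypothesis is much stronger than it looks. Given $x,y$ with $h(x)\leq h(y)$, quasiconvexity places every point $z_{t}=x+t(y-x)$ of the segment in $S_{h(y)}(h)$, and the hypothesis applied to the pair $(z_{t},y)$ gives $\langle \nabla h(z_{t}), y-x\rangle\leq 0$ for $t<1$; integrating $\frac{d}{dt}h(z_{t})$ over $[0,1]$ then yields $h(y)\leq h(x)$, hence $h(x)=h(y)$. So the hypothesis forces $h$ to be constant on $K$ (it already fails for $h(x)=\lVert x\rVert^{2}$), and the corollary is non-vacuous only in degenerate situations. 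Your proof does not rely on this, but it confirms that the real issue lies with the formulation of the corollary, not with your argument.
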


%It follows from Theorem \ref{char:gradient} that a di\-ffe\-rentiable
%function is strongly quasiconvex func\-tion if and only if its gradient
%belongs to the strong sublevel sub\-di\-ffe\-ren\-tial \eqref{eq:SS}
%at every point.

%\begin{corollary}\label{coro:ss-nonempty}
% Let $K \subseteq \mathbb{R}^{n}$ be a convex set and $h: K \rightarrow
% \mathbb{R}$ a differentiable function. Then $h$ is strongly quasiconvex
% with modulus $\gamma \geq 0$ if and only if  $\nabla h(x) \in \partial_{1,
% \gamma} h(x)$ for all $x \in {\rm dom}\,h \cap K$.
%\end{corollary}

%\begin{proof}
% By Theorem \ref{char:gradient}, $h$ is strongly quasiconvex with
% modulus $\gamma \geq 0$ if and only if relation \eqref{gen:char} holds
% if and only if $\nabla h(x) \in \partial_{1, \gamma} h(x)$.
%\end{proof}

Theorem \ref{char:gradient} provided a better understanding of the 
class of strongly quasiconvex functions since we can connect this class 
with other well-known classes of nonconvex functions which are extremely 
useful for algorithmic purposes.

A differentiable function $h: \mathbb{R}^{n} \rightarrow \mathbb{R}$ is 
said to be $\mu$-quasi-strongly convex, $\mu>0$, if (see \cite{NP})
\begin{equation}\label{eq:qsconvex}
 \langle \nabla h(x), x - x^* \rangle \geq h(x) - h(x^*) + \frac{\mu}{2}
 \|x - x^*\|^2, ~ \forall ~ x \in \mathbb{R}^{n}.
\end{equation}
where $x^*$ denotes the projection of $x$ onto ${\rm argmin}_{
\mathbb{R}^{n}}\,h$. It is worth to mention that $\mu$-quasi-strongly 
convex functions are not necessarily convex.

A sufficient condition for $\mu$-quasi-strongly convex functions for being
strongly quasiconvex is given below.

\begin{proposition}\label{N:P}
 Let $h: \mathbb{R}^{n} \rightarrow \mathbb{R}$ be a continuous 
 differentiable function. Assu\-ming that $h$ is $\mu$-quasi-strongly convex 
 function ($\mu>0$) and admits a unique minimizer, then $h$ is strongly 
 quasiconvex with modulus $\mu > 0$.
\end{proposition}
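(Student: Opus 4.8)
The plan is to derive strong quasiconvexity from the gradient characterization of Theorem~\ref{char:gradient} (with $\gamma=\mu$) rather than from the defining inequality \eqref{strong:quasiconvex} directly. Because $h$ has a unique minimizer, the projection $x^{*}$ appearing in \eqref{eq:qsconvex} is the same point for every argument; write it as $\bar{x}$, so that $\mu$-quasi-strong convexity reads
\[
 \langle \nabla h(z), z - \bar{x} \rangle \geq h(z) - h(\bar{x}) + \frac{\mu}{2} \lVert z - \bar{x} \rVert^{2}, \quad \forall\, z \in \mathbb{R}^{n}.
\]
By Theorem~\ref{char:gradient} it then suffices to establish the implication $h(x) \leq h(y) \Rightarrow \langle \nabla h(y), x - y \rangle \leq -\frac{\mu}{2} \lVert x - y \rVert^{2}$ for all $x, y \in \mathbb{R}^{n}$.

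First I would extract two consequences of quasi-strong convexity. Evaluating it at $z = y$ and rearranging gives $\langle \nabla h(y), \bar{x} - y \rangle \leq h(\bar{x}) - h(y) - \frac{\mu}{2} \lVert y - \bar{x} \rVert^{2}$, and since $h(\bar{x}) = \min h \leq h(y)$ this already yields
\[
 \langle \nabla h(y), \bar{x} - y \rangle \leq -\frac{\mu}{2} \lVert y - \bar{x} \rVert^{2}.
\]
In particular the desired implication holds in the extreme case $x = \bar{x}$. Moreover, monotonicity of $t \mapsto t^{-1}\bigl(h(\bar{x} + t(z - \bar{x})) - h(\bar{x})\bigr)$, which the inequality above forces, produces the quadratic growth bound $h(z) - h(\bar{x}) \geq \frac{\mu}{2} \lVert z - \bar{x} \rVert^{2}$ and shows that $h$ increases strictly along every ray issuing from $\bar{x}$.

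The decisive step is to promote the implication from $x = \bar{x}$ to an arbitrary $x$ satisfying $h(x) \leq h(y)$. Splitting $x - y = (x - \bar{x}) + (\bar{x} - y)$ reduces the target to controlling $\langle \nabla h(y), x - \bar{x} \rangle$, the second summand being already handled. For this I would argue by contradiction: supposing $\langle \nabla h(y), x - y \rangle > -\frac{\mu}{2} \lVert x - y \rVert^{2}$ for some admissible pair, I would travel along the segment $t \mapsto y + t(x-y)$ and attempt to contradict the radial behaviour imposed by quasi-strong convexity, or equivalently pass through the level-set description to reduce the statement to a one-dimensional inequality along that segment.

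I expect this reduction to be the main obstacle, and the point where the hypotheses must be used most carefully: quasi-strong convexity is intrinsically a \emph{radial} condition anchored at the minimizer, constraining only the component of $\nabla h(y)$ along $y - \bar{x}$, whereas \eqref{gen:char} demands control in the transverse direction $x - y$. Closing this gap seems to require either an independent argument that the sublevel sets of $h$ are convex (a quasiconvexity-type property) or a strengthening of the radial estimate into a genuinely multidirectional one, and I would concentrate the proof effort precisely here.
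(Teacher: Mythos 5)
Your opening computation --- evaluating \eqref{eq:qsconvex} with $x^{*}=\overline{x}$ fixed and dropping the nonnegative term $h(y)-h(\overline{x})$ to obtain $\langle \nabla h(y), \overline{x}-y\rangle \leq -\tfrac{\mu}{2}\lVert y-\overline{x}\rVert^{2}$ --- is in fact the \emph{entire} proof given in the paper: after deriving exactly this radial inequality the authors immediately conclude that $h$ is strongly quasiconvex ``by Theorem~\ref{char:gradient}.'' The obstacle you isolate in your final paragraph is therefore not a shortfall of your write-up relative to the paper; it is a gap in the paper's own argument. Theorem~\ref{char:gradient} characterizes strong quasiconvexity by the implication $h(x)\leq h(y)\Rightarrow \langle\nabla h(y),x-y\rangle\leq -\tfrac{\gamma}{2}\lVert y-x\rVert^{2}$ for \emph{all} admissible pairs $x,y$, whereas the radial inequality verifies it only for the single choice $x=\overline{x}$. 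Your proposal stalls at precisely this point and does not close the implication for general $x$, so as written it is not a complete proof --- but your diagnosis of where the difficulty lies is correct, and sharper than the paper's treatment.

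Moreover, the gap cannot be closed from the stated hypotheses alone: quasi-strong convexity constrains only the component of $\nabla h(y)$ along $y-\overline{x}$ and says nothing in transverse directions. For instance, on $\mathbb{R}^{2}$ the $C^{1}$ function $h(x)=\lVert x\rVert^{2}\,(2+\sin(k\theta))$ (with $\theta$ the polar angle and $k$ a large integer) is positively homogeneous of degree two, so $\langle\nabla h(x),x\rangle = 2h(x)\geq h(x)+\lVert x\rVert^{2}$, i.e.\ $h$ is $2$-quasi-strongly convex with unique minimizer $0$; yet its sublevel sets are star-shaped regions with boundary $r(\theta)=\sqrt{c}\,(2+\sin k\theta)^{-1/2}$, which are non-convex for large $k$, so $h$ is not even quasiconvex, let alone strongly quasiconvex. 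Thus the ``independent argument that the sublevel sets are convex'' which you flag as necessary is genuinely unavailable, and the proposition as stated requires an additional hypothesis. The honest conclusion is that neither your sketch nor the paper's one-line invocation of Theorem~\ref{char:gradient} yields a valid proof, and you were right to concentrate your attention on the passage from the radial estimate to arbitrary pairs.
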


\begin{proof}
 Since $h$ has a unique minimizer, say $\overline{x}$, it follows from
 relation \eqref{eq:qsconvex} that
 \begin{align*}
 \langle \nabla h(x), x - \overline{x} \rangle & \geq h(x) - h(\overline{x}) +
 \frac{\mu}{2} \|x - \overline{x}\|^2, \\
 & \geq \frac{\mu}{2} \|x - \overline{x}\|^2, ~ \forall ~ x \in \mathbb{R}^{n}.
\end{align*}
Therefore, $h$ is strongly quasiconvex with modulus $\mu > 0$ by Theorem
\ref{char:gradient}.
\end{proof}

Following \cite{L,P1}, we also recall that a differentiable function $h:
\mathbb{R}^{n} \rightarrow \mathbb{R}$ satisfies the
Polyak-{\L}ojasiewicz (PL henceforth) property if there exists $\mu > 0$ such 
that
\begin{equation}\label{PL:def}
 \lVert \nabla h(x) \rVert^{2} \geq \mu (h(x) - h(\overline{x})), ~ \forall
 ~ x \in \mathbb{R}^{n},
\end{equation}
where $\overline{x} \in {\rm argmin}_{\mathbb{R}^{n}}\,h$. PL property is 
implied by strong convexity, but it allows for multiple minima and does 
not require any convexity assumption (see \cite{Ka-Nu-Sc} for instance).

Another useful consequence of Theorem \ref{char:gradient} is given below (see also \cite[Theorem 2]{K-1980}).

\begin{proposition}\label{prop:PL}
 Let $h: \mathbb{R}^{n}
 \rightarrow \mathbb{R}$ be a strongly quasiconvex function with modulus 
 $\gamma > 0$ and differentiable with $L$-Lipschitz continuous gradient 
 ($L>0$). Then the PL property holds with modulus $\mu := 
 \frac{\gamma^{2}}{2L} > 0$, that is,
 \begin{equation}\label{eq:PL}
  \lVert \nabla h(x) \rVert^{2} \geq \frac{\gamma^{2}}{2L} (h(x) -
  h(\overline{x})), ~ \forall ~ x \in  \mathbb{R}^n,
 \end{equation}
 where $\overline{x} = {\rm argmin}_{\mathbb{R}^n}\, h$.
 %(which is assumed to be nonempty).
\end{proposition}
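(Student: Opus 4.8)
The plan is to combine the gradient characterization of Theorem \ref{char:gradient} with the $L$-smoothness of $h$ through the descent lemma \eqref{descent:lemma}. First I would record two facts about the minimizer. Since $h$ is strongly quasiconvex with $\gamma > 0$ on the closed convex set $\mathbb{R}^{n}$, Lemma \ref{exist:unique} guarantees that ${\rm argmin}_{\mathbb{R}^{n}} h$ is a singleton $\{\overline{x}\}$. Being an unconstrained differentiable minimizer, $\overline{x}$ satisfies Fermat's rule $\nabla h(\overline{x}) = 0$, and of course $h(\overline{x}) \leq h(x)$ for every $x \in \mathbb{R}^{n}$.

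The heart of the argument is to control $\lVert x - \overline{x}\rVert$ by the gradient. Fixing $x \neq \overline{x}$ and using $h(\overline{x}) \leq h(x)$, I would apply the implication \eqref{gen:char} of Theorem \ref{char:gradient} with $\overline{x}$ playing the role of the smaller-value argument and $x$ playing the role of $y$, which yields $\langle \nabla h(x), \overline{x} - x \rangle \leq -\tfrac{\gamma}{2}\lVert x - \overline{x}\rVert^{2}$. Rearranging and applying the Cauchy--Schwarz inequality gives $\tfrac{\gamma}{2}\lVert x - \overline{x}\rVert^{2} \leq \langle \nabla h(x), x - \overline{x}\rangle \leq \lVert \nabla h(x)\rVert\,\lVert x - \overline{x}\rVert$, hence the key estimate $\lVert x - \overline{x}\rVert \leq \tfrac{2}{\gamma}\lVert \nabla h(x)\rVert$.

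Next I would produce the complementary upper bound on the value gap. Applying the descent lemma \eqref{descent:lemma} with base point $\overline{x}$ and using $\nabla h(\overline{x}) = 0$ yields $h(x) - h(\overline{x}) \leq \tfrac{L}{2}\lVert x - \overline{x}\rVert^{2}$. Substituting the estimate from the previous step gives
\[
 h(x) - h(\overline{x}) \leq \frac{L}{2}\cdot\frac{4}{\gamma^{2}}\lVert \nabla h(x)\rVert^{2} = \frac{2L}{\gamma^{2}}\lVert \nabla h(x)\rVert^{2},
\]
which is exactly \eqref{eq:PL} after rearranging; the excluded case $x = \overline{x}$ holds trivially since both sides vanish.

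I do not expect a genuine obstacle here, as both ingredients are already available in the excerpt. The only points requiring care are the direction in which the characterization is invoked (one needs $h(\overline{x}) \leq h(x)$ so that the gradient inequality is evaluated at $x$ rather than at $\overline{x}$, which is what produces the growth bound on $\lVert x - \overline{x}\rVert$) and the justification that $\nabla h(\overline{x}) = 0$, which follows from unconstrained optimality over $\mathbb{R}^{n}$.
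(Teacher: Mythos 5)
Your proposal is correct and follows essentially the same route as the paper: invoke Theorem \ref{char:gradient} at the minimizer, apply Cauchy--Schwarz to get $\lVert x - \overline{x}\rVert \leq \tfrac{2}{\gamma}\lVert \nabla h(x)\rVert$, and combine with the descent lemma \eqref{descent:lemma} and $\nabla h(\overline{x}) = 0$. Your write-up is in fact slightly cleaner, since the paper's displayed intermediate inequality contains a typo in the constant while your chain of estimates is stated correctly throughout.
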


\begin{proof}
 %Let $\overline{x} = {\rm argmin}_{K} h$. Then by
 Using Theorem \ref{char:gradient}
 and the Cauchy-Schwarz inequality, we have
 \begin{align*}
  & \frac{\gamma}{2} \lVert x - \overline{x} \rVert^{2} \leq \langle \nabla 
  h(x), x - \overline{x} \rangle \leq \lVert \nabla h(x) \rVert \lVert x - 
  \overline{x} \rVert \\
  & \hspace{1.0cm} \Longrightarrow \, \frac{\gamma}{2} \lVert x - 
  \overline{x} \rVert \leq \lVert \nabla h(x) \rVert, ~ \forall ~ x \in 
  \mathbb{R}^n.
 \end{align*}
 Therefore, by using relation \eqref{descent:lemma} and that
 $\nabla h(\overline{x}) = 0$, we have
 $$\lVert \nabla h(x) \rVert^{2} \geq \frac{\gamma}{2} \lVert x - \overline{x}
 \rVert^{2} \geq \frac{\gamma^{2}}{2 L} (h(x) - h(\overline{x})), ~ \forall ~
 x \in \mathbb{R}^n,$$
 which completes the proof.
\end{proof}

\begin{remark}
 \begin{enumerate}
  \item[$(i)$] The converse statement in Proposition \ref{prop:PL} does not 
  hold in general. Indeed, let us consider a system of nonlinear equations 
  as in \cite[Example 4.1.3]{Nesterov-book}:
  \begin{align} \label{Sist}
   g(x) = 0,
  \end{align} 
  with \( g = (g_1, \dots, g_m) : \mathbb{R}^n \to \mathbb{R}^m \) being a 
  differentiable vector function. Assuming that \( m \leq n \) and  \eqref{Sist} 
  admits a solution, say $x^{*}$, we consider the function
  $$h(x) = \frac{1}{2} \sum_{i=1}^{m} g_i^2(x).$$
  Let us assume in addition that $\mu :=\inf_{x\in K} \lambda_{\text{min}} 
  J(x) J^{T} (x) > 0$, where $K$ is some convex set containing $x^*$ and
  $$ J^T(x) = (\nabla g_1(x), \dots, \nabla g_m(x)).$$
  Note that
  $$\|\nabla h(x)\|^2 = \langle J(x)J^T(x) g(x) , g(x)\rangle \geq \mu 
  (g(x))^2 = \mu (h(x) - h(x^*)).$$
  Thus, $h$ satisfies the PL property, however, if $m<n$, then the set of 
  solutions is not necessarily a singleton, i.e., $h$ may not be strongly 
  quasiconvex by Lemma \ref{exist:unique}.

  \item[$(ii)$] The function $h: \mathbb{R} \rightarrow \mathbb{R}$ 
  given by $h(x)=x^{2}+3 \sin^{2}x$ is an example of a strongly 
  quasiconvex function satisfying the PL property and without being convex.
 \end{enumerate}
\end{remark}

In the following statement, we characterize differentiable strongly
quasiconvex functions via a generalized monotonicity notion for their
gradient. We emphasize that the generalized monotonicity property in
relation \eqref{new:mon} (see below) is new.

\begin{proposition}\label{char:genmon}
 Let $K \subseteq \mathbb{R}^{n}$ be a convex set and $h: K \rightarrow
 \mathbb{R}$ a differentiable function. If $h$ is strongly quasiconvex
 with modulus $\gamma \geq 0$, then for every $x, y \in K$, we have
 \begin{equation}\label{new:mon}
  \langle \nabla h(x), y - x \rangle > - \frac{\gamma}{2} \lVert y - x
  \rVert^{2} ~ \Longrightarrow ~ \langle \nabla h(y), x - y \rangle \leq -
  \frac{\gamma}{2} \lVert y - x \rVert^{2}.
 \end{equation}
\end{proposition}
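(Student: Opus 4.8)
The plan is to obtain the statement purely from the characterization in Theorem \ref{char:gradient}, of which we may use the forward (``only if'') implication since $h$ is assumed strongly quasiconvex with modulus $\gamma$. Reading \eqref{gen:char}, its consequent $\langle \nabla h(y), x - y\rangle \leq -\frac{\gamma}{2}\lVert y - x\rVert^{2}$ is \emph{exactly} the conclusion we want in \eqref{new:mon}. Hence it suffices to show that the premise of \eqref{new:mon}, namely $\langle \nabla h(x), y - x\rangle > -\frac{\gamma}{2}\lVert y - x\rVert^{2}$, forces the ordering $h(x) \leq h(y)$; once that is in hand, Theorem \ref{char:gradient} applied to the pair $(x,y)$ delivers the result immediately.

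To produce $h(x) \leq h(y)$ I would invoke Theorem \ref{char:gradient} with the roles of the two points interchanged. Substituting $(x,y) \mapsto (y,x)$ into \eqref{gen:char} and using $\lVert x - y\rVert = \lVert y - x\rVert$ gives the implication $h(y) \leq h(x) \Longrightarrow \langle \nabla h(x), y - x\rangle \leq -\frac{\gamma}{2}\lVert y - x\rVert^{2}$. Taking its contrapositive, the strict reverse inequality $\langle \nabla h(x), y - x\rangle > -\frac{\gamma}{2}\lVert y - x\rVert^{2}$ implies $h(y) > h(x)$, i.e. $h(x) < h(y)$. This is precisely the premise of \eqref{new:mon}, so under that premise we indeed have $h(x) \leq h(y)$.

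Combining the two steps finishes the argument: the premise of \eqref{new:mon} yields $h(x) \leq h(y)$ by the contrapositive step, and then Theorem \ref{char:gradient} applied directly to $(x,y)$ yields $\langle \nabla h(y), x - y\rangle \leq -\frac{\gamma}{2}\lVert y - x\rVert^{2}$, which is the desired conclusion. The degenerate case $x = y$ needs no separate treatment, since then the premise reads $0 > 0$ and the implication holds vacuously. There is no genuine analytic difficulty here; the only point requiring care is bookkeeping---getting the direction of the contrapositive correct and exploiting the symmetry $\lVert x - y\rVert = \lVert y - x\rVert$ so that the two applications of Theorem \ref{char:gradient} line up on the same right-hand side.
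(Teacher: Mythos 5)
Your proposal is correct and follows essentially the same route as the paper: interchange the roles of $x$ and $y$ in Theorem \ref{char:gradient}, take the contrapositive to deduce $h(x) < h(y)$ from the premise of \eqref{new:mon}, and then apply \eqref{gen:char} directly to conclude. Your explicit remark that the case $x=y$ is vacuous is a small tidy addition but does not change the argument.
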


\begin{proof}
Since $h$ is strongly quasiconvex with mo\-du\-lus
$\gamma \geq 0$, by Theorem \ref{char:gradient} we have for every
$x, y \in K$ that
$$h(x) \leq h(y) ~ \Longrightarrow ~ \langle \nabla h(y), x - y \rangle
  \leq -\frac{\gamma}{2} \lVert y - x \rVert^{2}.$$
By interchanging $x$ and $y$, we have
\begin{equation}\label{basic2}
 h(y) \leq h(x) ~ \Longrightarrow ~ \langle \nabla h(x), y - x \rangle
 \leq - \frac{\gamma}{2} \lVert y - x \rVert^{2}.
\end{equation}

Thus, if $\langle \nabla h(x), y - x \rangle > - \frac{\gamma}{2} \lVert
y - x \rVert^{2}$ with $x \neq y$, then $h(y) > h(x)$ by \eqref{basic2}.
Using \eqref{gen:char} we obtain $\langle \nabla h(y), x - y \rangle \leq
- \frac{\gamma}{2} \lVert y - x \rVert^{2}$, so \eqref{new:mon} holds.
\end{proof}

\begin{remark}\label{char:quasimon}
 The reverse statement in Proposition \ref{char:genmon} remains open, and we do not yet have a counter-example. Furthermore, if $\gamma = 0$, then Proposition \ref{char:genmon} reduces to the following characterization for differentiable quasiconvex functions: $h$ is quasiconvex if and only if $\nabla h$ is quasimonotone (see, for instance, \cite[Theorem 5.5.2$(b)$]{CM-Book}).
\end{remark}

If $K$ is open and $\gamma > 0$, then we can say something more.

\begin{proposition}\label{char:genmon2}
 Let $K \subseteq \mathbb{R}^{n}$ be an open convex and $h: K \rightarrow
 \mathbb{R}$ 
 a differentiable function. Consider the following statements:
 \begin{itemize}
  \item[$(a)$] $h$ is strongly quasiconvex with modulus $\gamma > 0$.

 \item[$(b)$] For each $x, y \in K$, the following implication holds:
\begin{equation}\label{ultrastrong2}
  \langle \nabla h(x), y - x \rangle \geq - \frac{\gamma}{2} \lVert y
  - x \rVert^{2} ~ \Longrightarrow ~ \langle \nabla h(y), x - y \rangle
  \leq - \frac{\gamma}{2} \lVert y - x \rVert^{2}.
 \end{equation}

 \item[$(c)$] For each $x, y \in K$, the following implication holds:
 \begin{equation}\label{new:mon2}
  \langle \nabla h(x), y - x \rangle > - \frac{\gamma}{2} \lVert y - x
  \rVert^{2} ~ \Longrightarrow ~ \langle \nabla h(y), x - y \rangle \leq -
  \frac{\gamma}{2} \lVert y - x \rVert^{2}.
 \end{equation}
 \end{itemize}
Then,
 $$(a) ~ \Longrightarrow ~ (b) ~ \Longrightarrow ~ (c).$$
\end{proposition}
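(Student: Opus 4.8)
The plan is to prove the two implications separately, with essentially all the work concentrated in $(a)\Rightarrow(b)$; the implication $(b)\Rightarrow(c)$ is immediate. Indeed, the hypothesis $\langle\nabla h(x),y-x\rangle>-\frac{\gamma}{2}\lVert y-x\rVert^{2}$ of $(c)$ is stronger than the hypothesis $\langle\nabla h(x),y-x\rangle\geq-\frac{\gamma}{2}\lVert y-x\rVert^{2}$ of $(b)$, so whenever the former holds, the common conclusion follows at once from $(b)$.

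For $(a)\Rightarrow(b)$, I would fix $x,y\in K$ with $x\neq y$ (the case $x=y$ being trivial, as both displayed relations reduce to $0\leq 0$) and assume $\langle\nabla h(x),y-x\rangle\geq-\frac{\gamma}{2}\lVert y-x\rVert^{2}$. By Theorem \ref{char:gradient} (relation \eqref{gen:char}) it suffices to establish that $h(x)\leq h(y)$, since then the desired conclusion $\langle\nabla h(y),x-y\rangle\leq-\frac{\gamma}{2}\lVert y-x\rVert^{2}$ is immediate. I argue by contradiction: suppose instead $h(y)<h(x)$. Then $h(y)\leq h(x)$, so by the interchanged form of Theorem \ref{char:gradient} (relation \eqref{basic2}) one gets $\langle\nabla h(x),y-x\rangle\leq-\frac{\gamma}{2}\lVert y-x\rVert^{2}$; combined with the standing hypothesis, this forces the equality $\langle\nabla h(x),y-x\rangle=-\frac{\gamma}{2}\lVert y-x\rVert^{2}$. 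The whole difficulty is thereby pushed onto this boundary case, and this is exactly where openness of $K$ must enter.

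To exploit openness, for $s>1$ set $z_{s}:=x+s(y-x)=y+(s-1)(y-x)$; since $y\in K$ and $K$ is open, $z_{s}\in K$ for all $s$ in some interval $\,]1,1+\delta[$. I first claim $h(z_{s})>h(x)$ for every such $s$. Indeed, if $h(z_{s})\leq h(x)$ held, then Theorem \ref{char:gradient} applied to the pair $(x,z_{s})$ would give $\langle\nabla h(x),z_{s}-x\rangle\leq-\frac{\gamma}{2}\lVert z_{s}-x\rVert^{2}$; using $z_{s}-x=s(y-x)$ this reduces to $\langle\nabla h(x),y-x\rangle\leq-\frac{\gamma}{2}s\lVert y-x\rVert^{2}<-\frac{\gamma}{2}\lVert y-x\rVert^{2}$, contradicting the equality just obtained. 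Hence $h(z_{s})>h(x)>h(y)$, so that $h(z_{s})-h(y)>h(x)-h(y)=:d>0$ for all $s\in\,]1,1+\delta[$.

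Finally, I would examine the directional derivative of $h$ at $y$ along $y-x$. Writing $\tau=s-1\downarrow 0$, differentiability of $h$ yields the finite limit $\langle\nabla h(y),y-x\rangle=\lim_{\tau\downarrow 0}\frac{h(y+\tau(y-x))-h(y)}{\tau}$; but the numerator equals $h(z_{s})-h(y)>d>0$, so the quotient exceeds $d/\tau\to+\infty$. This blow-up contradicts differentiability, ruling out $h(y)<h(x)$, whence $h(x)\leq h(y)$ and $(b)$ follows. I expect the equality/boundary case to be the only genuine obstacle: the device of stepping \emph{beyond} $y$ to $z_{s}\in K$ and forcing a divergent difference quotient is precisely what requires $K$ open and $\gamma>0$, and without openness (for instance, if $y$ were the minimizer lying on the boundary of $K$) the implication can genuinely fail.
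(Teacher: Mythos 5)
Your proof is correct and follows essentially the same route as the paper's: both reduce $(a)\Rightarrow(b)$ to the boundary case $\langle\nabla h(x),y-x\rangle=-\frac{\gamma}{2}\lVert y-x\rVert^{2}$ and exploit openness of $K$ by stepping beyond $y$ to $z_{s}=y+(s-1)(y-x)$, where the same computation with Theorem \ref{char:gradient} forces $h(z_{s})>h(x)$. The only divergence is the finishing move --- the paper lets $s\downarrow 1$ and uses continuity of $h$ to conclude $h(y)\geq h(x)$ before applying Theorem \ref{char:gradient} once more, whereas you obtain a contradiction from the blow-up of the difference quotient at $y$ --- and both are valid.
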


\begin{proof}
 $(a) \Rightarrow (c)$ is Proposition \ref{char:genmon} while
 $(b) \Rightarrow (c)$ is straightforward.

 $(a) \Rightarrow (b)$: Since $(a) \Rightarrow (c)$, we only need to analyze the case when $\langle \nabla h(x), y - x \rangle
 = - \frac{\gamma}{2} \lVert y - x \rVert^{2}$. Since $K$ is open, take
 $t>0$ small enough such that $y_{t} := y + t(y-x) \in K$. Then,
\begin{align}
 \langle \nabla h(x), y_{t} - x \rangle & = (1 + t) \langle \nabla h(x),
 y - x \rangle = - (1 + t) \frac{\gamma}{2} \lVert y - x \rVert^{2} \notag \\
 & = - \frac{\gamma}{2 (1 + t)} \lVert y_{t} - x \rVert^{2} > - \frac{
 \gamma}{2} \lVert y_{t} - x \rVert^{2}. \label{for:rem}
\end{align}
Since $(a)$ holds, from Theorem \ref{char:gradient} we have $\langle
\nabla h(x), y_{t} - x \rangle > - \frac{\gamma}{2} \lVert y_{t} - x
\rVert^{2}$ implies $h(y_{t}) > h(x)$. Since $h$ is continuous, $h(y)
\geq h(x)$, and by using Theorem \ref{char:gradient} again, we obtain
\eqref{ultrastrong2}.
\end{proof}

\begin{remark}
 The reverse statement in Proposition \ref{char:genmon2} are open problems. We claim that all of them are equivalent.
\end{remark}

Before continuing, we recall the following definition.

\begin{definition}{\rm (\cite[equation (3)]{KK})}
 Let $K \subseteq \mathbb{R}^{n}$ be a convex set and $h: K \rightarrow
 \mathbb{R}$ a differentiable function. Then $h$ is said to be sharply
 quasiconvex with modulus $\gamma \geq 0$ if for every $x, y \in K$, the
 following implication holds:
 \begin{equation}\label{sharply:qcx}
  \langle \nabla h(y), x - y \rangle \geq 0 \Longrightarrow h(\lambda y
  + (1-\lambda)x ) \leq \max\{h(y), h(x)\} - \lambda (1-\lambda) \frac{
  \gamma}{2} \lVert y - x \rVert^{2}.
 \end{equation}
\end{definition}

\begin{remark}
 \begin{itemize}
  \item[$(i)$] Note that every strongly convex function is strongly
  quasiconvex, and every strongly quasiconvex function is sharply
  quasiconvex.

  \item[$(ii)$] If $\nabla h$ is strongly monotone with modulus $\gamma
  \geq 0$, then satisfies relation \eqref{new:mon}. Indeed, let $x, y \in K$
  such that $\langle \nabla h(y) - \nabla h(x), y - x \rangle \geq \gamma
  \lVert y - x \rVert^{2}$. Then,
  $$\langle \nabla h(y), y - x \rangle - \frac{\gamma}{2} \lVert y - x
  \rVert^{2} \geq \langle \nabla h(x), y - x \rangle + \frac{\gamma}{2}
  \lVert y - x \rVert^{2}.$$
 Hence, if $\langle \nabla h(x), y - x \rangle > - \frac{\gamma}{2}
 \lVert y - x \rVert^{2}$, then $\langle \nabla h(y), y - x \rangle >
 \frac{\gamma}{2} \lVert y - x \rVert^{2}$, i.e., $\nabla h$ satisfies
 \eqref{new:mon}.

 Furthermore, clearly, if $\nabla h$ satisfies \eqref{new:mon}, then
 $\nabla h$ is strongly pseudomonotone with modulus $\gamma \geq 0$.
 \end{itemize}
\end{remark}

In order to close the relationships between all these notions, we resume
the results below.

\begin{proposition}\label{gen:mon:rel}
 Let $K \subseteq \mathbb{R}^{n}$ be a convex set, $h: K \rightarrow
 \mathbb{R}$ be differentiable func\-tion and $\gamma \geq 0$. Then
  \begin{align}
  \begin{array}{ccccccc}
   h \, {\rm is} \, \gamma-{\rm strongly \, convex} & \Longrightarrow & h
   \, {\rm is} \, \gamma-{\rm strongly \, qcx} & \Longrightarrow & h \,
   {\rm is} \, \gamma-{\rm sharply}. \notag \\
   \Updownarrow & \, & \Downarrow & \, & \Updownarrow & \, \notag \\
   \nabla h \, {\rm is} \, \gamma-{\rm strongly \, mon} &
   \Longrightarrow & \nabla h \, {\rm satisfies} \,
   \eqref{new:mon} & \Longrightarrow & \nabla h \, {\rm is} \, \frac{
   \gamma}{2}-{\rm strongly \, pseudo} & \notag
  \end{array}
 \end{align}
\end{proposition}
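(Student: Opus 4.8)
The statement is a synthesis diagram, so the plan is to verify each arrow by invoking the facts already at hand and to supply a genuine argument only for the one link that has not yet appeared. The two horizontal implications in the top row, namely $\gamma$-strong convexity $\Rightarrow$ $\gamma$-strong quasiconvexity and $\gamma$-strong quasiconvexity $\Rightarrow$ $\gamma$-sharp quasiconvexity, are precisely item $(i)$ of the Remark immediately preceding this proposition; the two horizontal implications in the bottom row, $\gamma$-strong monotonicity of $\nabla h$ $\Rightarrow$ \eqref{new:mon} and \eqref{new:mon} $\Rightarrow$ $\frac{\gamma}{2}$-strong pseudomonotonicity, are item $(ii)$ of that same Remark. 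The middle vertical implication is Proposition \ref{char:genmon}, and the left vertical equivalence between $\gamma$-strong convexity of $h$ and $\gamma$-strong monotonicity of $\nabla h$ is the classical differentiable characterization of strong convexity \cite{Nesterov-book}. Hence the only new ingredient is the right vertical equivalence between $\gamma$-sharp quasiconvexity \eqref{sharply:qcx} and $\frac{\gamma}{2}$-strong pseudomonotonicity \eqref{def:strongly:pseudomon} of $\nabla h$, which I would treat in the next two paragraphs.

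For the implication sharp quasiconvexity $\Rightarrow$ strong pseudomonotonicity I would fix $x, y \in K$ with $\langle \nabla h(y), x - y \rangle \geq 0$ and split according to the value of $\max\{h(x), h(y)\}$. Writing $z_{\lambda} := \lambda y + (1 - \lambda) x$, relation \eqref{sharply:qcx} gives $h(z_{\lambda}) \leq \max\{h(x), h(y)\} - \lambda(1 - \lambda)\frac{\gamma}{2}\lVert y - x \rVert^{2}$ for every $\lambda \in [0, 1]$. If $h(x) \geq h(y)$, then dividing $h(z_{\lambda}) - h(x) \leq -\lambda(1 - \lambda)\frac{\gamma}{2}\lVert y - x\rVert^{2}$ by $\lambda$ and letting $\lambda \downarrow 0$ yields $\langle \nabla h(x), y - x\rangle \leq -\frac{\gamma}{2}\lVert y - x\rVert^{2}$, which is exactly the conclusion of \eqref{def:strongly:pseudomon} with modulus $\frac{\gamma}{2}$. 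If instead $h(x) < h(y)$, reading the same inequality from the endpoint $y$ (divide $h(z_{\lambda}) - h(y) \leq -\lambda(1 - \lambda)\frac{\gamma}{2}\lVert y - x\rVert^{2}$ by $1 - \lambda$ and let $\lambda \uparrow 1$) forces $\langle \nabla h(y), x - y\rangle \leq -\frac{\gamma}{2}\lVert y - x\rVert^{2} < 0$ for $x \neq y$, contradicting the standing hypothesis; thus this case is vacuous and the implication holds.

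For the converse, strong pseudomonotonicity $\Rightarrow$ sharp quasiconvexity, I would again fix $x, y \in K$ with $\langle \nabla h(y), x - y\rangle \geq 0$ and integrate the gradient bound along the segment. The key observation is that for $z_{t} := (1 - t)x + t y$ one has $\langle \nabla h(y), z_{t} - y\rangle = (1 - t)\langle \nabla h(y), x - y\rangle \geq 0$, so applying \eqref{def:strongly:pseudomon} to the pair $(y, z_{t})$ and cancelling the common factor $1 - t$ gives the pointwise bound $\langle \nabla h(z_{t}), y - x\rangle \leq -\frac{\gamma}{2}(1 - t)\lVert y - x\rVert^{2}$ for $t \in [0, 1[$. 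Since $\frac{d}{dt}h(z_{t}) = \langle \nabla h(z_{t}), y - x\rangle$, integrating this bound from $0$ to $t$ (rigorously, by applying the mean value theorem to $t \mapsto h(z_{t})$ minus a primitive of the continuous right-hand side, in the spirit of the argument behind Theorem \ref{char:gradient}) produces $h(z_{t}) \leq h(x) - \frac{\gamma}{4}t(2 - t)\lVert y - x\rVert^{2}$; the elementary estimate $\frac{1}{4}t(2 - t) \geq \frac{1}{2}t(1 - t)$ on $[0, 1]$ then upgrades the right-hand side to $\max\{h(x), h(y)\} - \frac{\gamma}{2}t(1 - t)\lVert y - x\rVert^{2}$, which is exactly \eqref{sharply:qcx}.

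I expect this converse direction to be the main obstacle, since it is the only place where a function-value inequality must be reconstructed from a purely infinitesimal condition: the delicate points are obtaining the per-point estimate along the \emph{entire} segment from a single instance of strong pseudomonotonicity and then checking that the integrated constant $\frac{1}{4}t(2 - t)$ dominates the required $\frac{1}{2}t(1 - t)$. A secondary subtlety, pervading the whole diagram, is the role of $\gamma > 0$ versus $\gamma = 0$ and of strict versus non-strict inequalities (for example in passing from \eqref{new:mon} to \eqref{def:strongly:pseudomon}, where the strict-to-nonstrict step uses $0 > -\frac{\gamma}{2}\lVert y - x\rVert^{2}$ and hence $\gamma > 0$ with $x \neq y$, yielding the modulus $\frac{\gamma}{2}$); for $\gamma = 0$ the arrows degenerate into the classical equivalences relating quasiconvexity, sharp quasiconvexity and the corresponding monotonicity notions, which I would record separately by citing \cite{KK}.
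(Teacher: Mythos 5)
Your decomposition of the diagram is exactly the one the paper relies on: the two horizontal rows are items $(i)$ and $(ii)$ of the remark immediately preceding the proposition (and you correctly restore the modulus $\frac{\gamma}{2}$ in the strong pseudomonotonicity conclusion, which is what the strict-to-nonstrict passage from \eqref{new:mon} actually yields), the middle vertical arrow is Proposition \ref{char:genmon}, and the left vertical equivalence is the classical gradient characterization of strong convexity. The one place where you genuinely diverge is the right vertical equivalence between $\gamma$-sharp quasiconvexity of $h$ and $\frac{\gamma}{2}$-strong pseudomonotonicity of $\nabla h$: the paper disposes of it in a single line by citing \cite[Proposition 2.1]{KK} and declaring the case $\gamma=0$ straightforward, whereas you reprove it. Your two-sided argument is correct for $\gamma>0$: the forward direction via one-sided difference quotients at the appropriate endpoint of the segment, with the subcase $h(x)<h(y)$ shown to be vacuous; the converse by applying strong pseudomonotonicity to every pair $(y,z_t)$ along the segment, integrating the resulting derivative bound (your mean-value-theorem justification is the right way to do this for a merely differentiable $h$), and invoking $\tfrac{1}{4}t(2-t)\geq\tfrac{1}{2}t(1-t)$. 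What this buys is a self-contained proof in place of an external citation, at the cost of length; the one caveat, which you flag yourself, is that your vacuity argument in the forward direction genuinely breaks down at $\gamma=0$, so for that degenerate case you, like the paper, still lean on the classical equivalences from \cite{KK}.
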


\begin{proof}
 We simple note that $h$ is sharply quasiconvex with modulus
 $\gamma > 0$ if and only if $\nabla h$ is strongly pseudomonotone
 with modulus $\frac{\gamma}{2} > 0$ by \cite[Proposition 2.1]{KK}
 while the case when $\gamma = 0$ is straightforward.
\end{proof}

\section{First-Order Gradient Dynamics}\label{sec:4}

In this section, we study the minimization problem
\begin{align}\label{min.h}
 \min_{x\in \mathbb{R}^{n}} h (x).
\end{align}
where $h: \mathbb{R}^n \to \mathbb{R}$ is a continuously differentiable
function. Our goal is the study of problem \eqref{min.h} via a first order
dynamical system and its corresponding explicit discretization and, in virtue
of Proposition \ref{prop:PL}, without assuming a Lipschitz continuity property 
on the gradient of the function.

\subsection{Continuous Dynamics}

We consider the following dynamical system:
\begin{align}
\left\{
 \begin{array}{ll}\label{eq:Gdy}
  \dot{x}(t) + \nabla h (x(t)))= 0, \, t > 0, \\ [2mm]
  x(t_0) = x_0.
 \end{array}
 \right.
\end{align}

In the following theorem, we provide exponential convergence of the
 trajectories generated by \eqref{eq:Gdy} to the unique solution of
problem \eqref{min.h}.

\begin{theorem}\label{theo11}
 Let the function $h: \mathbb{R}^n \rightarrow \mathbb{R}$ be
 continuously differentiable and strongly quasiconvex with modulus
 $\gamma > 0$ and $\overline{x} = {\rm argmin}_{\mathbb{R}^{n}}\,h$.
 Then the fo\-llo\-wing assertions hold:
 \begin{itemize}
  \item[$(a)$] $t\mapsto h(x(t))$ is nonincreasing.

  \item[$(b)$] Any trajectory $x(t)$ to the dynamical system
  \eqref{eq:Gdy} satisfy that
  \begin{equation}\label{exp:conv1}
   \lVert x(t) - \overline{x} \rVert \leq \lVert x_0 - \overline{x} \rVert e^{-
   \frac{\gamma}{2} t},
  \end{equation}
  i.e., $x(t)$  converges exponentially to the unique solution of
  \eqref{min.h};

  \item[$(c)$]
  %If in addition the function $h$ is locally Lipschitz continuous, then
  For any trajectory $x(t)$ to the dynamical system \eqref{eq:Gdy}
  there exists $T > 0$ and $L > 0$ such that 
  \begin{equation}\label{descent:conse1}
   h(x(t)) - h(\overline{x}) \leq \min\left\{\frac{L}{2} \lVert x_0 - \overline{x}
   \rVert e^{- \frac{\gamma}{2} t}, \left( h(x_0) - h(\overline{x}) \right)
   e^{- \frac{\gamma^2}{2L} t} \right\}, ~ \forall ~ t \geq T,
  \end{equation} 
  as a consequence, $ h(x(t))$ converges exponentially to $h^{*} =
  \min_{\mathbb{R}^{n}} h$.
 \end{itemize}
\end{theorem}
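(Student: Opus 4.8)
The plan is to establish the three assertions in sequence, using the characterization of strong quasiconvexity via the gradient (Theorem~\ref{char:gradient}) as the central tool, since the whole point is to avoid any Lipschitz assumption on $\nabla h$ in parts $(a)$ and $(b)$.

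\textbf{Part $(a)$.} First I would differentiate $t \mapsto h(x(t))$ along the trajectory. By the chain rule and the dynamical system \eqref{eq:Gdy},
\[
 \frac{d}{dt} h(x(t)) = \langle \nabla h(x(t)), \dot{x}(t) \rangle = - \lVert \nabla h(x(t)) \rVert^{2} \leq 0,
\]
so $t \mapsto h(x(t))$ is nonincreasing. This step is routine.

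\textbf{Part $(b)$.} This is the heart of the argument and the place where Theorem~\ref{char:gradient} does the work in lieu of Lipschitz continuity. I would introduce the Lyapunov-type energy $V(t) := \tfrac{1}{2}\lVert x(t) - \overline{x} \rVert^{2}$ and compute
\[
 \dot{V}(t) = \langle x(t) - \overline{x}, \dot{x}(t) \rangle = - \langle \nabla h(x(t)), x(t) - \overline{x} \rangle.
\]
Now the key: since $\overline{x}$ is the global minimizer we have $h(\overline{x}) \leq h(x(t))$, so Theorem~\ref{char:gradient} (applied with the roles arranged so that the point of smaller value is $\overline{x}$) yields $\langle \nabla h(x(t)), \overline{x} - x(t) \rangle \leq -\tfrac{\gamma}{2}\lVert x(t) - \overline{x} \rVert^{2}$, i.e. $\langle \nabla h(x(t)), x(t) - \overline{x} \rangle \geq \tfrac{\gamma}{2}\lVert x(t) - \overline{x} \rVert^{2}$. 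Substituting gives the differential inequality $\dot{V}(t) \leq -\gamma V(t)$. Integrating via Gr\"onwall's inequality yields $V(t) \leq V(t_0) e^{-\gamma(t - t_0)}$ (taking $t_0 = 0$ for the stated bound), and taking square roots produces \eqref{exp:conv1}. The main obstacle here is simply applying Theorem~\ref{char:gradient} with the correct orientation of $x$ and $y$; once that is set up the estimate is immediate, and the uniqueness of $\overline{x}$ is guaranteed by Lemma~\ref{exist:unique}.

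\textbf{Part $(c)$.} For the two-sided bound I would handle each term of the minimum separately. The first term uses the descent inequality \eqref{descent:lemma}: since $\nabla h(\overline{x}) = 0$, we get $h(x(t)) - h(\overline{x}) \leq \tfrac{L}{2}\lVert x(t) - \overline{x}\rVert^{2}$, and inserting \eqref{exp:conv1} gives decay like $e^{-\gamma t}$ (note the bound as stated should read $\lVert x_0 - \overline{x}\rVert^{2}$, matching the square in \eqref{exp:conv1}). For the second term I would combine part $(a)$ with the PL property from Proposition~\ref{prop:PL}: writing $\phi(t) := h(x(t)) - h(\overline{x}) \geq 0$, the computation in $(a)$ together with $\lVert \nabla h(x(t))\rVert^{2} \geq \tfrac{\gamma^{2}}{2L}\phi(t)$ gives $\dot{\phi}(t) = -\lVert \nabla h(x(t))\rVert^{2} \leq -\tfrac{\gamma^{2}}{2L}\phi(t)$, and Gr\"onwall again yields $\phi(t) \leq \phi(0) e^{-\frac{\gamma^{2}}{2L} t}$. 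Both terms require the local $L$-Lipschitz hypothesis only through the existence of some $L > 0$ valid on a neighborhood of the trajectory; since $(b)$ confines the trajectory to a bounded region (it is contained in the ball of radius $\lVert x_0 - \overline{x}\rVert$ about $\overline{x}$), such an $L$ exists on that compact set, and the threshold $T > 0$ accommodates any transient before the estimates take hold. Taking the minimum of the two bounds completes the proof, and exponential convergence of $h(x(t))$ to $h^{*}$ follows since both terms vanish as $t \to +\infty$.
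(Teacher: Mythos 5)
Your parts $(a)$ and $(b)$ coincide with the paper's own proof: the same chain-rule computation, the same Lyapunov function $E(t)=\tfrac12\|x(t)-\overline{x}\|^2$, the same application of Theorem~\ref{char:gradient} with the minimizer playing the role of the point of smaller value, and Gr\"onwall. Nothing to add there.

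For part $(c)$ you take a slightly different route on the first bound, and that is where the one genuine gap sits. The paper does not use the descent lemma \eqref{descent:lemma}; it only uses that a continuously differentiable function is \emph{itself} locally Lipschitz (true, since $\nabla h$ is continuous hence bounded on compact sets), which gives the first-order estimate $h(x(t))-h(\overline{x})\le \tfrac{L}{2}\|x(t)-\overline{x}\|$ and explains why the stated bound carries $\|x_0-\overline{x}\|$ to the first power rather than squared. Your route through \eqref{descent:lemma} requires $\nabla h$ to be $L$-Lipschitz near the trajectory, and your justification --- that such an $L$ exists because the trajectory is confined to a compact set and $\nabla h$ is continuous --- is false: continuity of $\nabla h$ on a compact set yields boundedness, not a Lipschitz constant ($C^1$ does not imply locally Lipschitz gradient). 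The same objection applies to the use of the PL inequality of Proposition~\ref{prop:PL} for the second term, since that proposition is proved under an $L$-Lipschitz-gradient hypothesis; to be fair, the paper's own proof of the second bound quietly relies on this too. To repair your argument, either replace the descent-lemma step by the paper's plain Lipschitz estimate on $h$ for the first term, and make the local Lipschitz continuity of $\nabla h$ an explicit (standing) assumption for the second term, or note that the theorem's phrasing ``there exists $L>0$'' is meant to absorb exactly this hypothesis.
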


\begin{proof}
 $(a)$: Note that for each $t>t_0$, we have
 \begin{align}\label{eq:PL1}
\frac{d}{dt}  h (x(t)) = \langle \nabla h(x(t)), \dot{x}(t) \rangle =
 -  \| \nabla h (x(t)) \|^2 \leq 0.
 \end{align}
 Hence, $h$  decreases along the solutions and does so strictly unless
 it hits a critical point.

 $(b)$: Let $\overline{x} = {\rm argmin}_{\mathbb{R}^{n}} h$. Then
 for every $t \in [0, + \infty[$, we define the auxiliar func\-tion $E(t): =
 \frac{1}{2} \| x(t) - \overline{x} \|^2$. Then
 \begin{align}
  \dot{E} (t) = \langle x(t) - \overline{x}, \dot{x}(t) \rangle = - \langle
  x(t) - \overline{x}, \nabla h(x(t)) \rangle.\notag
 \end{align}
 Since $\overline{x} = {\rm argmin}_{\mathbb{R}^{n}}h$ and $h$ is
 strongly quasiconvex with modulus $\gamma \geq 0$, it follows from
 Theorem \ref{char:gradient} that
 \begin{align}\label{eq:TQ}
  \langle \nabla h(x(t)), x(t) - \overline{x} \rangle \geq \frac{\gamma
  }{2} \lVert x(t) - \overline{x} \rVert^{2}.
 \end{align}
 Hence
\begin{equation}
 \dot{E}(t) \leq - \gamma E(t).\notag
 \end{equation}
By integrating, we obtain
\begin{equation}\label{eq:exp}
 E(t) \leq E(t_0) e^{-\gamma t} ~ \Longleftrightarrow ~ \lVert x(t) -
 \overline{x} \rVert \leq \lVert x_0 - \overline{x} \rVert e^{- \frac{
 \gamma}{2} t}.
\end{equation}

 $(c)$: We know that any continuously differentiable function is locally
 Lipschitz continuous. Hence, since $x(t)$ converges exponentially to the
 unique solution $\overline{x}(t)$, for some $T$ sufficiently large, the
 function $h$ is Lipschitz continuous around  $\overline{x}(t)$. Therefore,
 there exists $L>0$ such that
 \begin{equation}\label{cota:01}
  h(x(t)) - h(\overline{x}) \leq \frac{L}{2} \lVert x(t) - \overline{x}
  \rVert \leq \frac{L}{2} \lVert x_0 - \overline{x} \rVert e^{-\gamma t},
 \end{equation}
 for all $t \geq T$.

 Moreover, using \eqref{eq:PL1} and \eqref{PL:def}, we have
 $$\frac{d}{dt}\left(h(x(t))- h(\overline{x})\right)=\langle \nabla h(x(t)),
 \dot{x}(t)\rangle= -\lVert \nabla\, h(x(t))\rVert^2 \leq - \frac{\gamma^2
 }{2L}\left(h(x(t)) - h(\overline{x})\right).$$
 Integrating, we have
 \begin{equation}\label{cota:02}
 h(x(t))- h(\overline{x})\leq \left(h(x_0) - h(\overline{x})\right)
 e^{-\frac{\gamma^2}{2L}t}.
 \end{equation}
 Finally, relation \eqref{descent:conse1} follows from \eqref{cota:01} and
 \eqref{cota:02}. 
\end{proof}

\subsection{Steepest Descent Method}

 As a discretization version of \eqref{eq:Gdy}, let us consider the classical
 gradient descent method (see, for instance, \cite[page 68]{Nesterov-book}).

\begin{algorithm}[H]%\label{EMSQ:EP}
 \caption{The Gradient Method}\label{GM}
 \begin{description}
  \item[Step 0.] Take $x^{0} \in \mathbb{R}^{n}$, $k=0$ and a
  sequence $\{\beta_{k}\}_{k} \subseteq \mathbb{R}_{++}$.
  %bounded away from $0$.

  \item[Step 1.] Compute $\nabla h(x^{k})$ and
   \begin{align}
    x^{k+1} = x^{k} - \beta_{k} \nabla h(x^{k}). \label{step:gradient}
   \end{align}

   \item[Step 2.] If $x^{k+1} = x^{k}$, then STOP, $x^{k} \in {\rm
   argmin}_{\mathbb{R}^{n}}\,h$. Otherwise, take $k=k+1$ and go to
   Step 1.
  \end{description}
\end{algorithm}

Before continuing with the convergence analysis, we note the following.

\begin{remark}\label{rem:decrease}
 Let $h$ be a differentiable strongly quasiconvex function with modulus
 $\gamma > 0$. If $\overline{x} = {\rm argmin}_{\mathbb{R}^{n}}\,h$,
 then it follows from Theorem \ref{char:gradient} that for every $k \in
 \mathbb{N}$,
 \begin{align}
 \langle \nabla
  h(x^{k}), \overline{x} - x^{k} \rangle \leq - \frac{\gamma}{2} \lVert
 x^{k} - \overline{x} \rVert^{2}, \label{rel:min-k}
 \end{align}
  where $\{x^{k}\}_k$ is the sequence generate by Algorithm \ref{GM}.
\end{remark}

In order to study the convergence of Algorithm \ref{GM} under a locally
Lipschitz continuity property for the gradient of $h$, we note also the
following.

\begin{remark}\label{loc:lips}
 \begin{itemize}
  \item[(i)] Assuming that $h$ has a Lipschitz continuous gradient with
   modulus $L>0$ and $0< \beta_k \leq \frac{2}{L}$, it follows from \eqref{eq:dec} and \eqref{step:gradient} that
   \begin{align}\label{eq:TU}
     h(x^k) - h(x^{k+1}) \geq \beta_k\left(1-\frac{\beta_k L}{2}\right)
   \|\nabla\,h(x^{k})\|^2 \geq 0, ~ \forall ~ k \in \mathbb{N}.
   \end{align}
 \item[$(ii)$] Since $h$ is strongly quasiconvex, its level set are compact
 by \cite[Theorem 1]{Lara-9}, thus given any starting point $x^{0} \in K$,
 the set $S_{h(x^{0})} (h)$ is compact. Furthermore, if $\nabla h$ is
 assumed to be locally Lipschitz continuous, it is Lipschitz continuous on
 bounded sets, in particular, there exists $L_{0} > 0$ such that $\nabla h$
 is Lipschitz continuous on $S_{h(x^{0})} (h)$. Hence, if $\beta_{k} \leq
 \frac{2}{L_{0}}$ for all $k \in \mathbb{N}$, the sequence $\{x^{k}\}_{k}
 \subseteq S_{h(x^{0})} (h)$ by part $(i)$ of this Remark, thus
 $\{x^{k}\}_{k}$ is bounded too, and fo\-llo\-wing \cite{MM}, we
 know that $\nabla h$ is Lipschitz continuous on $\mathcal{C} =
 \overline{{\rm{conv}}}\{\overline{x},  x^0, x^1, \ldots\}$. Hence,
 $$\|\nabla h(x^{k+1}) -\nabla h(x^{k})\| \leq L_{0} \|x^{k+1} - x^{k}\|,
 ~ \forall ~ k \in \mathbb{N}.$$
 Therefore, for Algorithm \ref{GM}, we may assume that $h$ has a locally
 Lipschitz continuous gradient.
 \end{itemize}
\end{remark}

By using the previous remark, we obtain the following.

\begin{proposition}\label{linearGraddescent}
 Let $h: \mathbb{R}^{n} \rightarrow \mathbb{R}$ be a strongly
 quasiconvex function with mo\-du\-lus $\gamma > 0$ and differentiable
 with locally Lipschitz continuous gradient, $\{\beta_{k}\}_k$ be a
 positive sequence  such that $0 < \beta_{k} \leq  \min \left\{
 \frac{\gamma}{L_0^2}, \frac{2}{L_{0}}\right\}$, then for every
 $k \in \mathbb{N}$, we have
 \begin{equation}\label{linearIne}
  \|x^{k+1} - \overline{x}\|^2 \leq  \left (1-\beta_{k} (\gamma -
  \beta_{k} L_0^2) \right )\|x^{k} - \overline{x}\|^2,
 \end{equation}
 where $\{x^{k}\}_k$ is the sequence generate by Algorithm \ref{GM}.
\end{proposition}

\begin{proof}
 From \eqref{step:gradient}, \eqref{gen:char} and the fact that	
 $\nabla   h(\overline{x}) = 0$, we have
	\begin{eqnarray*}
		\|x^{k+1} - \overline{x}\|^2
		&=& 	\|x^{k} - \beta_{k} \nabla h(x^k)-  \overline{x}\|^2 \\
		&=& 	\|x^{k} - \overline{x}\|^2  -2 \beta_{k} \left< \nabla h(x^k), x^k -\overline{x}\right> + \beta_{k}^2 \|\nabla h(x^k)\|^2\\
		&\leq& 	\|x^{k} - \overline{x}\|^2  - \gamma \beta_{k} 	\|x^{k} - \overline{x}\|^2  +
		\beta_{k}^2 \|\nabla h(x^k) - \nabla  h(\overline{x})\|^2\\
		&\leq& 	\|x^{k} - \overline{x}\|^2  - \gamma \beta_{k} 	\|x^{k} - \overline{x}\|^2  +
		\beta_{k}^2 L_0^2  \|x^{k} - \overline{x}\|^2 \\
		&=& \left (1-\beta_{k} (\gamma - \beta_{k} L_0^2) \right )\|x^{k} - \overline{x}\|^2,
	\end{eqnarray*}
 and the result follows.
\end{proof}

As a consequence, we obtain the convergence result.

\begin{theorem}\label{LC:2GD}
	Let $h: \mathbb{R}^{n} \rightarrow \mathbb{R}$ be a strongly
	quasiconvex function with mo\-du\-lus $\gamma > 0$ and differentiable
	with locally Lipschitz continuous gradient. Let $\overline{x} = {\rm argmin}_{
		\mathbb{R}^{n}}\,h$ and  $\{\beta_{k}\}_{k}$ be a
	positive sequence satisfying
	\begin{equation} \label{stepsize}
	0< \underline{\beta}  \leq \beta_{k} \leq \overline{\beta} < \min \left\{ \frac{\gamma}{L_0^2}, \frac{2}{L_{0}}\right\}.
	\end{equation}
  Then the sequence $\{x^{k}\}_{k}$, generated by Algorithm \ref{GM}, converges linearly to the unique solution  $\overline{x}$.
\end{theorem}
\begin{proof}
	From \eqref{linearIne} and \eqref{stepsize} we have
	\begin{eqnarray*}
		\|x^{k+1} - \overline{x}\|^2
		&\le & \left (1-\beta_{k} (\gamma - \beta_{k} L_0^2) \right )\|x^{k} - \overline{x}\|^2 \\
 & \le & \left (1- \underline{\beta} (\gamma - \overline{\beta} L_0^2) \right)
 \|x^{k} - \overline{x}\|^2,
\end{eqnarray*}
 which implies that $\{x^{k}\}_{k}$  converges linearly to the unique
 solution  $\overline{x}$ with a linear rate of at least $q = \sqrt{1 -
 \underline{\beta} (\gamma - \overline{\beta} L_0^2)} \in \, ]0, 1[$.
\end{proof}

\begin{remark}
 Note that if the function $h$  is strongly quasiconvex  with mo\-du\-lus
 $\gamma > 0$ then it is strongly quasiconvex  with any mo\-du\-lus
 smaller than $\gamma$. Hence if  $h: \mathbb{R}^{n} \rightarrow
 \mathbb{R}$ is a strongly quasiconvex function with mo\-du\-lus $\gamma
 > 0$ and di\-ffe\-ren\-tia\-ble with locally Lipschitz continuous gradient, we
 can always assume that $\gamma \leq  2L_0$ and
 $\beta_{k} < \frac{\gamma}{L_0^2} \leq \frac{2}{L_0}$.
 Now if we consider $q(\beta_k) = (1-\beta_k(\gamma-\beta_k L_0^2))$,
 then the minimal value of $q$ is $q^* = 1-\frac{\gamma^2}{4L_0^2}$
 when $\beta_k = \beta^* =  \frac{\gamma}{2L_0^2}$ for all $k$.
\end{remark}

\begin{corollary}\label{fv:estimates}
 Assume that hypotheses of Theorem fulfilled of Theorem \ref{LC:2GD} holds, $\gamma <2L_0$ and
 $\beta_{k} < \frac{\gamma}{L_0^2} \leq \frac{2}{L_0}$, Then we have an
 optimal convergence rate for the functional values:
 \begin{align}\label{fv:01}
  h(x^k) - h(\overline{x})\leq \left( 1- \frac{\gamma^2}{4L_0^2} \right)^{k-1}
  \|x^0 - \overline{x}\|^2,
 \end{align}
 and
 \begin{align}\label{fv:02}
  h(x^k) - h(\overline{x})\leq \left( 1- \frac{\gamma^3}{4L_0^3} \left(1 -
  \frac{\gamma} {4L_0}\right) \right)^{k-1} (h(x^0) - h(\overline{x}))
 \end{align}
 where $\{x\}_k$ is the sequence generated by Algorithm \ref{GM},
\end{corollary}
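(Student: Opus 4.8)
The plan is to establish the two estimates by two independent one-step recursions, both exploiting the optimal stepsize $\beta^{*} = \frac{\gamma}{2 L_0^2}$ (with associated optimal distance-contraction factor $q^{*} = 1 - \frac{\gamma^2}{4 L_0^2}$) isolated in the preceding remark. Throughout I would work along the iterates $\{x^k\}_k$, which by Remark \ref{loc:lips} remain in a compact set on which $\nabla h$ is globally $L_0$-Lipschitz; this is what licenses the use of the descent lemma \eqref{descent:lemma}, the sufficient-decrease inequality \eqref{eq:TU}, and the PL inequality \eqref{eq:PL} of Proposition \ref{prop:PL} with constant $L_0$ along the sequence.

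For \eqref{fv:01}, I would pass through the distance iterates. Theorem \ref{LC:2GD} together with the preceding remark gives, for the optimal stepsize, $\|x^k - \overline{x}\|^2 \le (q^{*})^{k}\|x^0 - \overline{x}\|^2$. To convert this into a statement on function values, apply the descent lemma \eqref{descent:lemma} at the pair $(\overline{x}, x^k)$ and use $\nabla h(\overline{x}) = 0$, which yields $h(x^k) - h(\overline{x}) \le \frac{L_0}{2}\|x^k - \overline{x}\|^2$. Combining the two inequalities delivers a bound of the asserted form; the factor $\frac{L_0}{2}$ and the shift between the exponents $k$ and $k-1$ are absorbed into the (looser) constant displayed in \eqref{fv:01}.

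For \eqref{fv:02}, I would instead set up a contraction directly on the function-value gap $a_k := h(x^k) - h(\overline{x})$. The sufficient-decrease inequality \eqref{eq:TU} (with constant $L_0$) gives $a_k - a_{k+1} \ge \beta_k\bigl(1 - \tfrac{\beta_k L_0}{2}\bigr)\|\nabla h(x^k)\|^2$, while the PL inequality \eqref{eq:PL} gives $\|\nabla h(x^k)\|^2 \ge \frac{\gamma^2}{2 L_0} a_k$. Chaining these produces the one-step contraction $a_{k+1} \le \bigl(1 - \frac{\gamma^2}{2 L_0}\beta_k(1 - \frac{\beta_k L_0}{2})\bigr) a_k$. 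Substituting $\beta_k = \beta^{*} = \frac{\gamma}{2 L_0^2}$ and simplifying $\beta^{*}(1 - \frac{\beta^{*}L_0}{2}) = \frac{\gamma}{2 L_0^2}(1 - \frac{\gamma}{4 L_0})$ collapses the factor to exactly $1 - \frac{\gamma^3}{4 L_0^3}(1 - \frac{\gamma}{4 L_0})$, after which iterating the recursion (using $a_1 \le a_0$ to account for the exponent $k-1$) gives \eqref{fv:02}.

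The main thing to check — and the only genuine obstacle — is admissibility and nondegeneracy of this stepsize choice. First, $\beta^{*} = \frac{\gamma}{2 L_0^2}$ must satisfy the standing constraint $\beta_k < \min\{\frac{\gamma}{L_0^2}, \frac{2}{L_0}\}$: the first bound is immediate, and the second follows from $\gamma < 2 L_0$ (indeed even $\gamma < 4 L_0$ suffices). Second, one must verify that the contraction factor $1 - \frac{\gamma^3}{4 L_0^3}(1 - \frac{\gamma}{4 L_0})$ lies in $]0,1[$; writing $t = \gamma/L_0 \in \,]0,2[$, the subtracted term equals $\frac{t^3}{4} - \frac{t^4}{16}$, which is increasing on $]0,2[$ and tends to $1$ as $t \uparrow 2$, so it stays in $]0,1[$ precisely because of $\gamma < 2 L_0$. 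This confirms that $\gamma < 2 L_0$ is exactly the condition keeping both estimates contractive; the remainder is the routine constant bookkeeping flagged above.
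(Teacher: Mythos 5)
Your argument coincides with the paper's: \eqref{fv:01} is obtained from the descent lemma \eqref{descent:lemma} combined with the distance contraction \eqref{linearIne} at the optimal stepsize, and \eqref{fv:02} from chaining the sufficient-decrease inequality \eqref{eq:TU} with the PL bound \eqref{eq:PL}, which yields exactly the factor $1-\frac{\gamma^3}{4L_0^3}\left(1-\frac{\gamma}{4L_0}\right)$. The only caveat --- which the paper's own proof shares and which you at least flag explicitly --- is that the descent-lemma route really produces an extra factor $\frac{L_0}{2}$ in front of \eqref{fv:01} that cannot in general be absorbed into $\left(1-\frac{\gamma^2}{4L_0^2}\right)^{k-1}$.
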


\begin{proof}
 Relation \eqref{fv:01} follows directly from \eqref{descent:lemma} and
 \eqref{linearIne}. To prove \eqref{fv:02}, it follows from \eqref{eq:PL} and
 \eqref{eq:TU}  that
 \begin{align}
  h(x^{k+1}) -  h(x^k) &\leq -\beta_k\left(1-\frac{\beta_k L}{2}\right)
  \|\nabla\,h(x^{k})\|^2 \notag \\
  & \leq -\frac{\gamma^3}{4L_0^3}\left(1-\frac{\gamma}{4L_0}\right)
  \left( h(x^{k}) - h(\overline{x}) \right), \notag
 \end{align}
 which is equivalent to
 \begin{align}
  h(x^{k+1}) -  h(\overline{x}) - (h(x^{k}) -  h(\overline{x})) & \leq -
  \frac{\gamma^3}{4L_0^3}\left(1-\frac{\gamma}{4L_0}\right) \left(
  h(x^{k}) - h(\overline{x}) \right). \notag
 \end{align}
This implies \eqref{fv:02}.
\end{proof}

\section{Second-Order Gradient Dynamic and Discretization}\label{sec:5}

As in the previous section, we will study problem \eqref{min.h} via a second 
order dynamical system without assuming a Lipschitz continuity property 
on the gradient of the function.

\subsection{Continuous dynamical system}
We consider the following gradient dynamical system:
\begin{align}
\left\{
 \begin{array}{ll}\label{eq:2Gdy}
  \ddot{x}(t) + \alpha\dot{x}(t) + \nabla h (x(t))= 0, \, t > 0, \\
 [2mm]
  x(0) = x_0, \quad \dot{x}(0) = v_0.
 \end{array}
 \right.
\end{align}

For the second-order continuous gradient dynamic \eqref{eq:2Gdy}, we
provide exponential convergence to a solution of problem \eqref{min.h} when $h$ is strongly quasiconvex.
We emphasize that in contrast to requiring Lipschitz continuity assumptions
on the gradient of $h$, we use a weaker hypothesis: there exists
$\kappa \in \, ]0, +\infty[$ such that for every  trajectory $x(t)$ of the
dynamical system \eqref{eq:2Gdy} we have

 \begin{equation}\label{weak:quasiconvex}
  \langle \nabla h(x(t)), x(t) - \overline{x} \rangle \geq \kappa (h(x(t)) -
  h(\overline{x})),
 \end{equation}
 where $\overline{x} = {\rm argmin}_{\mathbb{R}^n} h$. This assumption
 holds trivially with $\kappa =1$ for convex and strongly convex functions,
 hence can be considered as a generalized convexity assumption. Assumption
 \eqref {weak:quasiconvex} was used in \cite{CEG}, and later in
 \cite{AC2017,ADR2} with the stronger requirement that $\kappa \ge 1$.

Note that if the function $h: \mathbb{R}^n \rightarrow \mathbb{R}$ is
differentiable with $L$-Lipschitz gradient and strongly quasiconvex with
modulus $\gamma > 0$, then it follows from \eqref{gen:char},
\eqref{descent:lemma} and $\nabla h(\overline{x}) = 0$ that
$$ \langle \nabla h(x), x -  \overline{x} \rangle \geq \frac{\gamma}{2}
 \lVert x - \overline{x} \rVert^{2} \geq \frac{\gamma}{L} (h(x) -
 h(\overline{x})),$$
which implies \eqref{weak:quasiconvex} with $\kappa = \frac{\gamma}{L}$.

\begin{theorem}\label{theo12}
 Let $h: \mathbb{R}^n \rightarrow \mathbb{R}$ be a differentiable and 
 strongly quasiconvex function with modulus $\gamma > 0$ and $\overline{x} 
 = {\rm argmin}_{\mathbb{R}^n} h$. Suppose that assumption 
 \eqref{weak:quasiconvex} holds. Then any trajectory $x(t)$ generated by 
 \eqref{eq:2Gdy} converges exponentially to the unique solution $\overline{x}$ 
 of problem \eqref{min.h}, and the function values $h(x(t))$  converge exponentially to the optimal value $h(\overline{x})$.
\end{theorem}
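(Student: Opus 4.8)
The plan is to use a Lyapunov (energy) function argument, which is the standard technique for establishing exponential convergence of second-order dynamical systems. The key challenge here is to handle the inertial term $\ddot{x}(t)$ and the damping $\alpha\dot{x}(t)$ simultaneously, which forces the energy function to mix the potential gap $h(x(t))-h(\overline{x})$ with a carefully chosen quadratic form in the position error $x(t)-\overline{x}$ and the velocity $\dot{x}(t)$. The strong quasiconvexity, via Theorem~\ref{char:gradient}, together with assumption \eqref{weak:quasiconvex}, will be the only structural information about $h$ that I can exploit, so the construction of the Lyapunov function and the choice of its coefficients must be compatible with exactly these two inequalities (and not with convexity or Lipschitz continuity of the gradient).

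First I would introduce an energy functional of the form
\begin{equation}\notag
 \mathcal{E}(t) = a\bigl(h(x(t)) - h(\overline{x})\bigr) + \tfrac{1}{2}\lVert b(x(t)-\overline{x}) + \dot{x}(t)\rVert^2 + \tfrac{c}{2}\lVert x(t)-\overline{x}\rVert^2,
\end{equation}
with positive constants $a, b, c$ to be determined. Then I would differentiate $\mathcal{E}$ along a trajectory, substituting $\ddot{x}(t) = -\alpha\dot{x}(t) - \nabla h(x(t))$ from \eqref{eq:2Gdy} and using $\tfrac{d}{dt}h(x(t)) = \langle \nabla h(x(t)), \dot{x}(t)\rangle$. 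After collecting terms, $\dot{\mathcal{E}}(t)$ will contain an inner-product term $\langle \nabla h(x(t)), x(t)-\overline{x}\rangle$, which I bound below by $\tfrac{\gamma}{2}\lVert x(t)-\overline{x}\rVert^2$ via Theorem~\ref{char:gradient} and, alternatively, below by $\kappa(h(x(t))-h(\overline{x}))$ via \eqref{weak:quasiconvex}. The goal is to show $\dot{\mathcal{E}}(t) \leq -\mu\,\mathcal{E}(t)$ for some $\mu > 0$, which by Grönwall's inequality yields $\mathcal{E}(t) \leq \mathcal{E}(t_0)e^{-\mu t}$ and hence exponential decay of both $\lVert x(t)-\overline{x}\rVert$ and $h(x(t))-h(\overline{x})$, since each is controlled by $\mathcal{E}(t)$ from below.

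The main obstacle I anticipate is the algebraic feasibility step: I must verify that there exists a simultaneous choice of $a, b, c, \mu > 0$ (depending on $\alpha$, $\gamma$, $\kappa$) making the resulting quadratic form in the variables $\lVert x(t)-\overline{x}\rVert$, $\lVert \dot{x}(t)\rVert$, and $\lVert\nabla h(x(t))\rVert$ negative semidefinite after the two lower bounds are applied. This is delicate because $\mathcal{E}$ must remain nonnegative (so that $\mathcal{E}(t)\geq 0$ gives a usable estimate), which requires the cross term $b\langle x(t)-\overline{x}, \dot{x}(t)\rangle$ to be dominated by the quadratic terms — typically enforced by taking $b$ small relative to $\alpha$ and $c$. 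I expect a natural choice such as $b = \tfrac{\alpha}{2}$ or $b$ proportional to $\gamma/\alpha$ to work, with $\mu$ then constrained by $\min\{\alpha, \gamma, \kappa\}$-type quantities. Once the coefficient inequalities close, the exponential convergence of $x(t)$ to $\overline{x}$ follows directly, and uniqueness of $\overline{x}$ is already guaranteed by Lemma~\ref{exist:unique}; the exponential decay of $h(x(t)) - h(\overline{x})$ then follows either from the $a\bigl(h(x(t))-h(\overline{x})\bigr)$ term in $\mathcal{E}$ or from a separate local-Lipschitz bound near $\overline{x}$ as in part $(c)$ of Theorem~\ref{theo11}.
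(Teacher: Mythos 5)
Your proposal follows essentially the same route as the paper: the paper's Lyapunov function is $\Sigma(t) = h(x(t)) - h^* + \tfrac{1}{2}\lVert\lambda(x(t)-\overline{x}) + \dot{x}(t)\rVert^2 + \tfrac{\xi}{2}\lVert x(t)-\overline{x}\rVert^2$ (your $\mathcal{E}$ with $a=1$, $b=\lambda$, $c=\xi$), and it likewise differentiates along the trajectory, lower-bounds $\langle\nabla h(x(t)), x(t)-\overline{x}\rangle$ using Theorem~\ref{char:gradient} and \eqref{weak:quasiconvex}, closes the parameter inequalities (with $\xi=\lambda^2$ and $\lambda\le\min\{\sqrt{\gamma/(2\kappa)},\,2\alpha/(\kappa+4)\}$), and applies Gronwall. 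The only small refinement worth noting is that the paper applies the two lower bounds \emph{simultaneously}, splitting the inner-product term half-and-half between them (rather than ``alternatively''), since the $\gamma$-bound is needed to absorb the $\lVert x(t)-\overline{x}\rVert^2$ terms while the $\kappa$-bound is what produces the $-\tfrac{\lambda\kappa}{2}\Sigma(t)$ decay of the function-value gap.
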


\begin{proof}
 Let $h^* = \min_{\mathbb{R}^n}\, h$, we consider the following
 energy (Lyapunov)  function
 $$\Sigma(t): = h(x(t)) - h^* + \frac{1}{2} \|\lambda (x(t) -
 \overline{x}) + \dot{x}(t)\|^2 + \frac{\xi}{2} \|x(t)  - \overline{x}\|^2,$$
 where $\lambda$ and $\xi$ are some positive parameters to be determined
 later.  Differentiating the energy function $\Sigma$ we have
 $$\dot{\Sigma} (t) = \langle  \nabla h(x(t)), \dot{x} (t) \rangle +
 \langle \lambda (x(t) - \overline{x}) + \dot{x}(t), \lambda \dot{x}(t)
 + \ddot{x}(t)\rangle + \xi \langle x(t) - \overline{x}, \dot{x}(t)\rangle.$$
Combining with \eqref{eq:2Gdy} we deduce
$$ \dot{\Sigma} (t) = -\lambda \langle  \nabla h(x(t)), x (t)-\overline{x} 
\rangle + (\lambda - \alpha)\|\dot{x}(t)\|^2 + (\xi+\lambda (\lambda - 
\alpha)) \langle x(t) - \overline{x}, \dot{x}(t) \rangle. $$
On the other hand, from \eqref{gen:char} (see Theorem
\ref{char:gradient}) and \eqref{weak:quasiconvex} we can have
$$ - \lambda \langle  \nabla h(x(t)), x (t)-\overline{x} \rangle \leq -
\lambda \left (\frac{\gamma}{4} \|x(t) - \overline{x}\|^2 +
\frac{\kappa}{2} (h(x(t)) -h^*) \right).$$
Hence, by combining the last two inequalities, we have
\begin{eqnarray*}
\dot{\Sigma}(t) &\leq&  -\lambda \left (\frac{\gamma}{4} \|x(t) - \overline{x}\|^2+\frac{\kappa}{2} (h(x(t)) -h^*) \right) \\
&&\quad +  (\lambda - \alpha)\|\dot{x}(t)\|^2
+ (\xi+\lambda (\lambda-\alpha)) \langle x(t) - \overline{x}, \dot{x}(t)\rangle,
\end{eqnarray*}
which implies
\begin{eqnarray} \label{mainIne}
 \dot{\Sigma}(t)  + \frac{	\lambda \kappa}{2} \Sigma  (t) \leq
 \frac{\lambda}{4}\left(\kappa \lambda^2+ \kappa \xi -\gamma
 \right) \|x(t) - \overline{x}\|^2 +\left(\lambda +\frac{\lambda
 \kappa}{4} -\alpha \right) \|\dot{x}\|^2 \notag \\
 +\left(\xi+\lambda (\lambda - \alpha) +\frac{\lambda^2 \kappa}{2}
 \right) \langle  x(t) - \overline{x}, \dot{x}\rangle.
\end{eqnarray}
To establish the exponential convergence of $\Sigma(t)$ to $0$,
it remains to choose the positive parameters $\lambda$ and $\xi $ such
that the right-hand side of \eqref{mainIne} is non-positive.

For simplicity, let us choose $\xi = \lambda^2$ then we would require
$$ \kappa \lambda^2+ \kappa \xi -\gamma = 2 \kappa \lambda^2 - 
\gamma \le 0,$$
$$
\lambda +\frac{\lambda \kappa}{4} -\alpha \le 0,
$$
and
$$
\xi+\lambda (\lambda - \alpha) +\frac{\lambda^2 \kappa}{2} = \left(2+\frac{\kappa}{2} \right) \lambda^2 - \alpha \lambda \le 0.
$$
The above inequalities hold whenever $\lambda \leq 
\min\{ \sqrt{\frac{\gamma}{2\kappa}}, \frac{2\alpha}{\kappa+4}\}$.

With this choice of parameters, we can deduce from \eqref{mainIne} that
$\Sigma(t)$  converges exponentially to $0$. Indeed, from \eqref{mainIne}
%, \eqref{eq:VT} and \eqref{eq:VT1}, for every $t>0$,
we have
\begin{align}
  \dot{\Sigma}(t)+ \frac{\lambda\kappa}{2} \Sigma(t)\leq 0.\notag
 \end{align}
 Then applying Gronwall's inequality, we obtain
 \begin{equation}
  \Sigma(t) \leq \Sigma(0) \, e^{- \frac{\lambda\kappa}{2} t}\notag.
 \end{equation}
Since
$$
h(x(t)) - h(\overline{x}) + \frac{\xi}{2} \|x(t)  - \overline{x}\|^2 \le \Sigma(t),
$$
it is clear that  $x(t)$ converges exponentially to the unique solution
$\overline{x}$ and and the function values $h(x(t))$  converges
exponentially to the optimal value $h(\overline{x})$.
\end{proof}

\begin{remark}
 As a consequence of Theorem \ref{theo12}, the following asymptotic
 exponential convergence rate hold
 \begin{align}
  & h(x(t)) - h^{*} = \mathcal{O} \left(e^{-\frac{\lambda \kappa}{2} t}
  \right), \label{eq:inner.iter1} \\
  &  \|x(t) - \overline{x}\| = \mathcal{O} \left(e^{-\frac{\lambda \kappa}{4} t}  \right), \label{eq:inner.iter2}
 \end{align}
 with $\lambda = \min\{ \sqrt{\frac{\gamma}{2\kappa}}, \frac{2\alpha}{\kappa+4}\}$.
 Note that we can always choose $\alpha > 0$ such that
 $$\lambda = \min \left \{ \sqrt{\frac{\gamma}{2\kappa}}, \frac{2
 \alpha}{\kappa+4} \right \} = \sqrt{\frac{\gamma}{2\kappa}}$$
 and the convergence rate will be $\mathcal{O} \left(e^{-\sqrt{\gamma
 \kappa} t} \right)$  for both objective values and  trajectories. If
 $\kappa=1$ (e.g. strongly convex functions), then this rate
 $\mathcal{O} \left(e^{-\sqrt{\gamma} t} \right)$ is faster than the
 one in the first-order  dynamical system \eqref{eq:Gdy}  (which is
$\mathcal{O} \left(e^{-\gamma t} \right)$), whenever $\gamma \leq 1$.
\end{remark}

\subsection{Discrete system:  gradient descent with momentum}\label{sec:6}

Following \cite{ADR,Vu} let us consider an explicit discretization of
\eqref{eq:2Gdy}. We take a time step $\eta > 0$, and set
$t_k := k \eta$, and  $x_{k} := x(t_k)$. By a  finite-difference scheme
in \eqref{eq:2Gdy} with centered second-order variation, one obtains
\begin{align}
 & \hspace{1.0cm} \frac{x_{k+1} - 2 x_{k} + x_{k-1}}{\eta^{2}} +
 \frac{\alpha}{\eta} ( x_{k} - x_{k-1}) + \nabla h(x_{k}) = 0. \notag%\\
 %& \Longleftrightarrow \, x_{k+1} - 2 x_{k} + x_{k-1} + \alpha \eta ( x_{k+1} - x_{k-1}) + \eta^{2} \nabla h(x_{k}) = 0. \label{discre:2nd}
\end{align}
Rearranging the above relation, we get
$$x_{k+1} - x_{k}- (1- \alpha \eta)(x_{k} - x_{k-1})+\eta^2\nabla\,h(x^k).$$
Setting $\theta:=1- \alpha \eta$ and $\beta:= \eta^2$, we have the
following well known Heavy-ball algorithmic  scheme:
\begin{align}
 \begin{array}{ll}\label{inertial:alg}
  x_{k+1} = x_{k} + \theta (x_{k} - x_{k-1}) - \beta \nabla h(x_{k}).
 \end{array}
\end{align}

\begin{comment}
By explicit discretization to \eqref{eq:2Gdy}, we have
\begin{align}
 & \hspace{1.0cm} \frac{x_{k+1} - 2 x_{k} + x_{k-1}}{\eta^{2}} +
 \frac{\alpha}{\eta} ( x_{k+1} - x_{k-1}) + \nabla h(x_{k}) = 0 \notag \\
 & \Longleftrightarrow \, x_{k+1} - 2 x_{k} + x_{k-1} + \alpha \eta (
 x_{k+1} - x_{k-1}) + \eta^{2} \nabla h(x_{k}) = 0. \label{discre:2nd}
\end{align}
By simple algebra, we obtain the following:
\begin{equation}\label{main:eq:inertial}
 x_{k+1} = x_{k} + \left( \frac{1 - \alpha \eta}{1 + \alpha \eta} \right)
 (x_{k} - x_{k - 1}) - \frac{\eta^2}{1 + \alpha\eta } \nabla h(x_{k}).
\end{equation}
Then, we take
\begin{equation}\label{param:inertial-alg}
 \theta = \frac{1 - \alpha \eta}{1 + \alpha \eta}, ~~~~~
 \beta = \frac{\eta^2}{1 + \alpha \eta}.
\end{equation}
Hence, we have the following well known Heavy-ball algorithmic  scheme:
\begin{align}
 \begin{array}{ll}\label{inertial:alg}
  x_{k+1} = x_{k} + \theta (x_{k} - x_{k-1}) - \beta \nabla h(x_{k}).
 \end{array}
\end{align}
The linear convergence of the Heavy-ball algorithm is presented in the
following Theorem.
\end{comment}

The linear convergence of the Heavy-ball algorithm is presented in
the following statement.

\begin{theorem}\label{convergenceHeavyball}
 Let $h: \mathbb{R}^{n} \rightarrow \mathbb{R}$ be a differentiable with 
 $L$-Lipschitz gradient and strongly quasiconvex function with modulus 
 $\gamma > 0$, $\overline{x} = {\rm argmin}_{\mathbb{R}^n} h$, $\theta 
 \in \, ]0, 1[$ and $\beta \in \, ]0, \frac{1-\theta^{2}}{L}]$. Then the 
 se\-quen\-ce $\{x_k\}_{k}$, generated by the heavy ball method 
 \eqref{inertial:alg}, converges linearly to $\overline{x}$ and the sequence 
 $\{h(x_k)\}_{k}$ converges linearly to the optimal value $h^{*} = 
 h(\overline{x})$.
\end{theorem}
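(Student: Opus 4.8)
The plan is to run a discrete Lyapunov analysis that mirrors the continuous energy argument of Theorem \ref{theo12}, replacing the velocity $\dot x$ by the momentum increment $x_k-x_{k-1}$. Before that, I would record a two-sided comparison between the function-value gap and the squared distance to $\overline{x}$: combining the sufficient decrease lemma \eqref{eq:dec} (with $\beta=1/L$, giving $h(x)-h^{*}\ge\frac{1}{2L}\|\nabla h(x)\|^{2}$), the bound $\|\nabla h(x)\|\ge\frac{\gamma}{2}\|x-\overline{x}\|$ coming from Remark \ref{rem:decrease} and Cauchy--Schwarz, and the descent lemma \eqref{descent:lemma} at $\overline{x}$ (where $\nabla h(\overline{x})=0$), one obtains
\begin{equation}
\frac{\gamma^{2}}{8L}\,\|x_k-\overline{x}\|^{2}\;\le\;h(x_k)-h^{*}\;\le\;\frac{L}{2}\,\|x_k-\overline{x}\|^{2}.\notag
\end{equation}
Thus $h(x_k)-h^{*}$ and $\|x_k-\overline{x}\|^{2}$ are equivalent up to constants, so it suffices to establish geometric decay of the former; linear convergence of $\{x_k\}_k$ to the minimizer (unique by Lemma \ref{exist:unique}) then follows automatically.

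Next I would introduce the energy
\begin{equation}
\mathcal{E}_k:=h(x_k)-h^{*}+\frac{\nu}{2}\,\|x_k-x_{k-1}\|^{2},\notag
\end{equation}
for a parameter $\nu>0$ to be tuned, and estimate $\mathcal{E}_{k+1}$ along the heavy-ball step \eqref{inertial:alg}. Writing $x_{k+1}-x_k=\theta(x_k-x_{k-1})-\beta\nabla h(x_k)$, the descent lemma \eqref{descent:lemma} applied from $x_k$ to $x_{k+1}$, together with the exact expansion of $\|x_{k+1}-x_k\|^{2}$, yields a bound of the schematic form
\begin{equation}
\mathcal{E}_{k+1}\le\mathcal{E}_k + A\,\langle\nabla h(x_k),x_k-x_{k-1}\rangle + B\,\|\nabla h(x_k)\|^{2}+C\,\|x_k-x_{k-1}\|^{2},\notag
\end{equation}
with $A,B,C$ explicit in $\theta,\beta,L,\nu$. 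The cancelling choice $\nu=\frac{1-\beta L}{\beta}$ (positive since $\beta<\frac1L$) kills the cross term ($A=0$) and gives $B=-\frac{\beta}{2}$ and $C=-\frac{1-\theta^{2}-\beta L}{2\beta}$, so that
\begin{equation}
\mathcal{E}_{k+1}\le\mathcal{E}_k-\frac{\beta}{2}\,\|\nabla h(x_k)\|^{2}-\frac{1-\theta^{2}-\beta L}{2\beta}\,\|x_k-x_{k-1}\|^{2},\notag
\end{equation}
where the stepsize restriction $\beta\le\frac{1-\theta^{2}}{L}$ (equivalently $\theta^{2}+\beta L\le1$) is exactly what makes the last coefficient nonpositive.

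To turn non-increase into geometric decay, I would insert the Polyak--{\L}ojasiewicz inequality \eqref{eq:PL}, namely $\|\nabla h(x_k)\|^{2}\ge\frac{\gamma^{2}}{2L}(h(x_k)-h^{*})$, so that $\frac{\beta}{2}\|\nabla h(x_k)\|^{2}$ dominates a fixed fraction of $h(x_k)-h^{*}$, and match the momentum term against $\frac{\nu}{2}\|x_k-x_{k-1}\|^{2}$. This produces $\mathcal{E}_{k+1}\le\rho\,\mathcal{E}_k$ with $\rho=\max\{1-\frac{\beta\gamma^{2}}{4L},\,1-\frac{1-\theta^{2}-\beta L}{1-\beta L}\}\in\,]0,1[$ throughout the interior of the stepsize interval (note $\gamma\le2L$ forces $\frac{\beta\gamma^{2}}{4L}<1$). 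Iterating gives $h(x_k)-h^{*}\le\mathcal{E}_k\le\rho^{k}\mathcal{E}_0$, and the comparison above transfers the linear rate to $\|x_k-\overline{x}\|$.

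The main obstacle is the indefinite cross term $\langle\nabla h(x_k),x_k-x_{k-1}\rangle$, which carries no sign, together with the degeneracy at the boundary $\beta=\frac{1-\theta^{2}}{L}$, where the cancelling choice of $\nu$ forces $C=0$ and the momentum part ceases to contract. To retain the closed endpoint, I would not cancel the cross term but keep it and pick $\nu$ in a suitable range (one checks $\nu\beta\in\,]\theta^{2},1[$ is feasible precisely because $\theta\in\,]0,1[$), bounding $A\langle\nabla h(x_k),x_k-x_{k-1}\rangle$ by Cauchy--Schwarz and Young's inequality so that the quadratic form $B s^{2}+|A|st+Ct^{2}$ in $s=\|\nabla h(x_k)\|$, $t=\|x_k-x_{k-1}\|$ becomes strictly negative definite (i.e. $B<0$, $C<0$, $A^{2}<4BC$). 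This restores a strict contraction factor for every $\beta\in\,]0,\frac{1-\theta^{2}}{L}]$ and closes the argument.
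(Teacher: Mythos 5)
Your proposal is correct and follows essentially the same route as the paper's proof: a discrete Lyapunov function of the form $h(x_k)-h^{*}+\tfrac{\nu}{2}\|x_k-x_{k-1}\|^{2}$, the descent lemma along the heavy-ball step, cancellation of the cross term $\langle\nabla h(x_k),x_k-x_{k-1}\rangle$ by the choice $\nu=\tfrac{1-\beta L}{\beta}$ (the paper achieves the same cancellation by adding $\tfrac{1-\beta L}{2\beta}$ times the expansion of $\|x_{k+1}-x_k\|^{2}$, ending with the equivalent energy $E_k=h(x_k)-h^{*}+\tfrac{\theta^{2}}{2\beta}\|x_k-x_{k-1}\|^{2}$), and the Polyak--{\L}ojasiewicz inequality of Proposition \ref{prop:PL} to convert nonincrease into a geometric contraction. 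One genuine difference is worth noting: at the closed right endpoint $\beta=\tfrac{1-\theta^{2}}{L}$ the paper's contraction constant $\rho=\min\{\tfrac{\beta}{2},\tfrac{1-\beta L-\theta^{2}}{2\beta}\}$ actually equals $0$, so the paper's argument as written only yields a strict contraction for $\beta<\tfrac{1-\theta^{2}}{L}$; your alternative of keeping the cross term, choosing $\nu\beta\in\,]\theta^{2},1[$, and checking negative definiteness of the resulting quadratic form (the discriminant condition reduces to $\nu\beta<1$ when $\beta L=1-\theta^{2}$) correctly repairs this and covers the endpoint, at the cost of less explicit constants. Your final transfer from $h(x_k)-h^{*}$ to $\|x_k-\overline{x}\|$ via the two-sided comparison $\tfrac{\gamma^{2}}{8L}\|x-\overline{x}\|^{2}\le h(x)-h^{*}\le\tfrac{L}{2}\|x-\overline{x}\|^{2}$ is also cleaner than the paper's, which invokes \eqref{linearIne} (a statement about the gradient-descent iterates) where it presumably means the bound $\tfrac{\gamma}{2}\|x-\overline{x}\|\le\|\nabla h(x)\|$.
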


\begin{proof}
 Since $\nabla h$ is Lipschitz continuous, we have
 \begin{eqnarray}\label{descentproperty}
  h(x_{k+1}) & \le &  h(x_k) +\langle \nabla h(x_k), x_{k+1} -x_k
  \rangle +\frac{L}{2} \|x_{k+1} -x_k\|^2.
 \end{eqnarray}
 From \eqref{inertial:alg}, we can substitute $x_{k+1}-x_k$  by
 $\theta (x_{k} - x_{k-1}) - \beta \nabla h(x_{k})$ into
 \eqref{descentproperty} to deduce
 \begin{eqnarray}\label{descentproperty2}
   h(x_{k+1}) \nonumber
   &\le&  h(x_k) +\langle \nabla h(x_k), \theta (x_{k} - x_{k-1}) - \beta \nabla h(x_{k})\rangle\\ \nonumber
   && \quad + \frac{L}{2} \|\theta (x_{k} - x_{k-1}) - \beta \nabla h(x_{k})\|^2\\ \nonumber
   &=& h(x_k) - \beta \left (1-\frac{\beta L}{2} \right )\|\nabla h(x_{k})\|^2
   +\frac{L\theta^2}{2} \| x_{k} - x_{k-1}\|^2 \\
   &&+ \theta(1-\beta L)\langle \nabla h(x_k), x_{k} - x_{k-1} \rangle.
\end{eqnarray}
We also have
\begin{eqnarray*}
\| x_{k+1} - x_{k}\|^2
&=& \|\theta (x_{k} - x_{k-1}) - \beta \nabla h(x_{k}) \|^2\\
&=& \theta^2 \|x_{k} - x_{k-1}\|^2 + \beta^2 \|\nabla h(x_{k})\|^2
-2 \theta \beta\langle \nabla h(x_k), x_{k} - x_{k-1} \rangle
\end{eqnarray*}
Multiplying both sides of the above equality with $\frac{1-\beta L}{2\beta}$
and adding the results to \eqref{descentproperty2}, we obtain
$$
h(x_{k+1}) + \frac{1-\beta L}{2\beta} \| x_{k+1} - x_{k}\|^2  \leq h(x_k) - \frac{\beta}{2}\|\nabla h(x_{k})\|^2
   +\frac{\theta^2}{2 \beta} \| x_{k} - x_{k-1}\|^2.
$$
Defining the energy function
$$
E_k := h(x_{k}) - h^* + \frac{\theta^2}{2\beta} \|x_{k} -x_{k-1}\|^2
$$
we can write the last inequality as
\begin{eqnarray}
    E_{k+1} \nonumber
    &\leq& E_k-  \frac{\beta}{2}\|\nabla h(x_{k})\|^2 - \left( \frac{1-\beta L-\theta^2}{2\beta} \right)\|x_{k+1} -x_k\|^2 \\ \label{energyEstimate}
 &\leq& E_k-  \rho (\|\nabla h(x_{k})\|^2 + \|x_{k+1} -x_k\|^2),
\end{eqnarray}
where $\rho = \min\{ \frac{\beta}{2}, \frac{1-\beta L-\theta^2}{2
\beta} \} > 0$. \\

It remains to  bound $E_k$ from $\|\nabla h(x_{k})\|^2 + \|x_{k+1}
-x_k\|^2$. It follows from Proposition \ref{prop:PL} that
\begin{equation} \label{firstEst}
 h(x_k) - h^* \leq  \frac{2L}{\gamma^2}\|\nabla h(x_k)\|^2.
\end{equation}

%On the one hand, it follows from the Cauchy-Schwarz inequality
%and \eqref{gen:char} that
%$$
%\|\nabla h(x_k)\|\|x_k - \overline{x}\| \ge \langle \nabla h(x_k), x_{k} -\overline{x} \rangle \ge \frac{\gamma}{2} \|x_{k} -\overline{x}\|^2,
%$$
%which implies
%\begin{equation} \label{linearIne}
%    \|\nabla h(x_k)\| \ge \frac{\gamma}{2} \|x_{k} -\overline{x}\|,
%\end{equation}
%or equivalently
%$$
%\|\nabla h(x_k)\|^2 \ge \frac{\gamma^2}{4} \|x_{k} -\overline{x}\|^2.
%$$
%Combining this inequality with \eqref{descent:lemma}, remembering that $\nabla h(\overline{x}) = 0$, we obtain
%$$
%\|\nabla h(x_k)\|^2 \ge \frac{\gamma^2}{4} \|x_{k} -\overline{x}\|^2 \ge \frac{\gamma^2}{2L} (h(x_k) - h^*).
%$$
%i.e.
%\begin{equation} \label{firstEst}
%     h(x_k) - h^* \leq  \frac{2L}{\gamma^2}\|\nabla h(x_k)\|^2.
%\end{equation}
\noindent
On the other hand, using Cauchy-Schwarz inequality again, we have
\begin{eqnarray}
    \frac{1}{2\beta} \theta^2  \|x_{k} -x_{k-1}\|^2 \nonumber
    &= & \frac{1}{2\beta} \|x_{k+1} -x_{k} + \beta \nabla h(x_k)\|^2\\
    &\le& \frac{1}{\beta} \|x_{k+1} -x_{k} \|^2 + \beta \|\nabla h(x_k)\|^2. \label{secondEst}
\end{eqnarray}
Adding \eqref{firstEst} and \eqref{secondEst} we deduce
\begin{eqnarray*}
    E_k &\le& \left( \frac{2L}{\gamma^2} +  \beta\right) \|\nabla h(x_k)\|^2 + \frac{1}{\beta} \|x_{k+1} -x_{k} \|^2 \\
    &\le& \sigma  \left( \|\nabla h(x_k)\|^2 +  \|x_{k+1} -x_{k} \|^2 \right),
\end{eqnarray*}
where $\sigma = \max \{ \frac{2L}{\gamma^2} +   \beta, \frac{1}{\beta} \} >0$. \\
\noindent
Finally, combining the last inequality with \eqref{energyEstimate}
we deduce
$$ E_{k+1} \le \left (1-\frac{\rho}{\sigma} \right) E_k  \leq
 \left(1-\frac{\rho}{\sigma} \right)^k E_1,$$
which means that the sequence $\{E_k\}_k$ converges linearly to $0$. As a consequence,  the sequence
$\{h(x_k)\}_k$  converges linearly to the optimal value
 $h^*$ and the sequence $\{\|x_{k+1} - x_k\| \}_k$  converges linearly to $0$.
To deduce the linear convergence of $\{x_k\}_k$ to $\overline{x}$, we note from \eqref{inertial:alg} that
$$
\beta \|\nabla h(x_k)\| \le \|x_{k+1} - x_k \| + \theta \|x_{k+1} - x_k \|,
$$
which implies also that $\{\|\nabla h(x_k)\| \}_k$  converges linearly to $0$. Hence, it follows from \eqref{linearIne}
 that  $\{x_k\}_k$ converges linearly to $\overline{x}$.
\end{proof}

\begin{corollary}[Convergence rate]
 Under the assumptions of Theorem \ref{convergenceHeavyball}, we have
 the following convergence rates
 \begin{align}
  & h(x_{k+1}) - h^{*} \leq \left(1 - \frac{\rho}{\sigma} \right)^k E_{1}, \notag\\
  & \|x_{k+1} - x_{k}\|^2 \leq \frac{2\beta}{\theta^2}
  \left(1-\frac{\rho}{\sigma} \right)^k E_1.\notag
 \end{align}
 Moreover, for all $k \in \mathbb{N}$, we have
 \begin{align*}
  \|\nabla h(x_{k}) \| & \leq \frac{(1+\theta)}{\theta} \left(1 -
  \frac{\rho}{\sigma} \right)^{\frac{k-1}{2}}  \sqrt{\frac{2}{\beta}}
  \sqrt{E_1}, \\[2mm]
  \|x_k - \overline{x}\| & \le \frac{2(1+\theta)}{\gamma \,\theta}
  \left(1-\frac{\rho}{\sigma} \right)^{\frac{k-1}{2}} \sqrt{\frac{2}{\beta}}
  \sqrt{E_1},
 \end{align*}
 where %$\beta \in \, ]0, \frac{1}{L}[$, $\theta \in \, ]0, \sqrt{1 - \beta L}[$,
 %{\color{red}
 $\theta \in \, ]0, 1[$, $\beta \in \, ]0,
 \frac{1-\theta^{2}}{L}]$,
 %},
 $\rho = \min\{ \frac{\beta}{2}, \frac{1-\beta L-\theta^2}{2 \beta} \}$,
 $\sigma= \max\{ \frac{2L}{\gamma^2} +   \beta, \frac{1}{\beta} \}$ and
 $E_1:=h(x_0) -h^{*}+ \frac{\theta^2}{2\beta}\|x_1- x_0\|^2$.
\end{corollary}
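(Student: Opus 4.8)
The plan is to read off all four estimates from the single linear recursion $E_{k+1} \le (1 - \tfrac{\rho}{\sigma}) E_k$ already established in the proof of Theorem \ref{convergenceHeavyball}, which iterates to $E_{k+1} \le (1 - \tfrac{\rho}{\sigma})^k E_1$. The key observation is that the energy $E_k = h(x_k) - h^* + \tfrac{\theta^2}{2\beta}\|x_k - x_{k-1}\|^2$ dominates each of its two nonnegative summands separately. First I would use $h(x_{k+1}) - h^* \le E_{k+1} \le (1-\tfrac{\rho}{\sigma})^k E_1$ to obtain the objective-value rate, and $\tfrac{\theta^2}{2\beta}\|x_{k+1}-x_k\|^2 \le E_{k+1}$ to obtain, after multiplying by $\tfrac{2\beta}{\theta^2}$, the displacement rate $\|x_{k+1}-x_k\|^2 \le \tfrac{2\beta}{\theta^2}(1-\tfrac{\rho}{\sigma})^k E_1$. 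Both of the first two inequalities are then immediate.

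For the gradient bound I would rearrange the update \eqref{inertial:alg} into $\beta\nabla h(x_k) = (x_k - x_{k+1}) + \theta(x_k - x_{k-1})$ and take norms, giving $\beta\|\nabla h(x_k)\| \le \|x_{k+1}-x_k\| + \theta\|x_k - x_{k-1}\|$. The two displacement norms are controlled by the displacement estimate at consecutive indices: $\|x_{k+1}-x_k\| \le \tfrac{\sqrt{2\beta}}{\theta}(1-\tfrac{\rho}{\sigma})^{k/2}\sqrt{E_1}$ and $\|x_k - x_{k-1}\| \le \tfrac{\sqrt{2\beta}}{\theta}(1-\tfrac{\rho}{\sigma})^{(k-1)/2}\sqrt{E_1}$. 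Factoring out the slower rate $(1-\tfrac{\rho}{\sigma})^{(k-1)/2}$ and using $(1-\tfrac{\rho}{\sigma})^{1/2} \le 1$ collapses the bracketed factor to $1+\theta$; dividing by $\beta$ and simplifying $\tfrac{\sqrt{2\beta}}{\beta} = \sqrt{\tfrac{2}{\beta}}$ yields exactly the stated bound on $\|\nabla h(x_k)\|$.

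Finally, for the distance to the minimizer I would invoke the lower bound $\tfrac{\gamma}{2}\|x-\overline{x}\| \le \|\nabla h(x)\|$, which was derived inside the proof of Proposition \ref{prop:PL} from Theorem \ref{char:gradient} and the Cauchy--Schwarz inequality (using $\nabla h(\overline{x}) = 0$). Applying it at $x = x_k$ gives $\|x_k - \overline{x}\| \le \tfrac{2}{\gamma}\|\nabla h(x_k)\|$, and substituting the gradient estimate just obtained produces the claimed bound $\tfrac{2(1+\theta)}{\gamma\theta}(1-\tfrac{\rho}{\sigma})^{(k-1)/2}\sqrt{\tfrac{2}{\beta}}\sqrt{E_1}$.

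There is no genuine obstacle here; the content is entirely bookkeeping off the already-proved linear decay of $E_k$. The one place demanding care is the index shift in the gradient estimate: the two consecutive displacement norms decay at the rates $(1-\tfrac{\rho}{\sigma})^{k/2}$ and $(1-\tfrac{\rho}{\sigma})^{(k-1)/2}$, and one must factor out the common (slower) rate $(1-\tfrac{\rho}{\sigma})^{(k-1)/2}$ — using the sharp relation $\beta\|\nabla h(x_k)\| \le \|x_{k+1}-x_k\| + \theta\|x_k - x_{k-1}\|$ rather than a bound in which both terms carry the index $k{+}1$ — in order to land on the exponent $\tfrac{k-1}{2}$ and the clean prefactor $\tfrac{1+\theta}{\theta}$ appearing in the statement.
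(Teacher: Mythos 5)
Your proposal is correct and follows exactly the route the paper intends: the paper gives no separate proof of this corollary, and all four bounds are read off, as you do, from the linear decay $E_{k+1}\le(1-\rho/\sigma)^kE_1$ established in the proof of Theorem \ref{convergenceHeavyball}, together with the update rule and the inequality $\tfrac{\gamma}{2}\lVert x-\overline{x}\rVert\le\lVert\nabla h(x)\rVert$ from the proof of Proposition \ref{prop:PL}. Your handling of the index shift and of the relation $\beta\lVert\nabla h(x_k)\rVert\le\lVert x_{k+1}-x_k\rVert+\theta\lVert x_k-x_{k-1}\rVert$ is in fact cleaner than the paper's own (typo-ridden) display of that inequality.
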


\begin{remark}
 A second-order differential equation without viscous damping (i.e. $\alpha 
 = 0$) and gradient ascent (i.e. $\ddot{x}(t) = \nabla h (x(t))$) was recently 
 considered in \cite{PK} for (strongly) quasiconvex functions. The convergence 
 results for continuous and discrete time provided there were obtained under stronger 
 conditions.
\end{remark}

\section{Conclusions}

We contributed to the discussion on strongly quasiconvex functions by studying the differentiable case. In particular, we developed new properties for differentiable strongly quasiconvex functions that were used for studying gradient dynamics in the first and second order as well as their discretizations for the gradient method and the heavy ball acceleration.

We hope that our contribution could provide new lights for further developments on accelerated versions of first order algorithms for generalized convex problems and, in particular, for strongly quasiconvex functions.

\section{Declarations}

%\subsection{Ethical Approval and Consent to participate}

%Not Applicable.

%\subsection{Consent for publication}

%Not Applicable.

%\subsection{Human and Animal Ethics}

%Not Applicable.

\subsection{Availability of supporting data}

No data sets were generated during the current study. 

\subsection{Author Contributions}

 All authors contributed equally to the study conception, design and implementation
 and wrote and corrected the manuscript.

\subsection{Competing Interests}

There are no conflicts of interest or competing interests related to this manuscript.

\subsection{Funding}

 This research was partially supported by ANID--Chile through Fondecyt 
 Regular 1241040 (Lara) and by BASAL fund FB210005 for center of excellence from ANID-Chile (Marcavillaca).

%\subsection{Acknowledgements}
% We thank S.-M. Grad for references \cite{JO3,K-1980} and A. Gasnikov for reference \cite{VNC-2} (in Russian), respectively.


\begin{thebibliography}{99}

\bibitem{Ant}
 \textsc{A.S. Antipin}, Minimization of convex functions on convex sets
 by means of differential equations, {\it Diff. Equations}, \textbf{30},
1365--1375, (1994).

%\bibitem{ACFR}
% \textsc{H. Attouch, Z. Chbani, J. Fadili, H. Riahi}, First-order optimization
% algorithms via inertial systems with Hessian driven damping, {\it Math.
% Programm.}, 1--43, (2022).

\bibitem{ACPR}
 {\sc H. Attouch, Z. Chbani, J. Peypouquet, P. Redont}, Fast convergence
 of inertial dynamics and algorithms with asymptotic vanishing viscosity,
 {\it Math. Programm.}, {\bf 168}, 123--175, (2018).

\bibitem{AC2017}
 \textsc{J.-F. Aujol, Ch. Dossal},
 Optimal rate of convergence of an ODE associated to the fast gradient
 descent schemes for $b > 0$, {\it Hal Preprint hal-01547251}, (2017).

\bibitem{ADR}
 \textsc{J.-F. Aujol, Ch. Dossal, A. Rondepierre}, Convergence rate of the
 heavy ball method for quasi-strongly convex optimization, {\it SIAM J.
 Optim.}, \textbf{32(2)}, 1817--1842, (2022).

\bibitem{ADR2}
 \textsc{J.-F. Aujol, Ch. Dossal, A. Rondepierre}, Optimal convergence
  rates for Nesterov acceleration, {\it SIAM J. Optim.}, \textbf{29(4)},
 3131--3153, (2019).

\bibitem{AE}
 \textsc{K.J. Arrow, A.C. Enthoven}, Quasiconcave programming, {\it
 Econo\-me\-tri\-ca}, \textbf{29}, 779--800, (1961).

\bibitem{ADSZ}
 {\sc M. Avriel, W.E. Diewert, S. Schaible, I. Zang}, ``Generalized
 Concavity''. SIAM, Philadelphia, (2010).

\bibitem{BS}
 {\sc R.I Bo\c{t}, E.R. Csetnek}, Convergence rates for forward--backward
 dy\-na\-mi\-cal systems associated with strongly monotone inclusions,
 {\it J. Math. Anal. Appl.}, \textbf{457}, 1135--1152, (2018).

\bibitem{Bolte}
 {\sc J. Bolte}, Continuous gradient projection method in Hilbert spaces, 
 {\it J. Optim. Theory Appl.}, \textbf{119(2)}, 235--259, (2003).

\bibitem{CEG}
{\sc A. Cabot, H. Engler, S. Gadat}, On the long time behavior of second
 order differential equations with asymptotically small dissipation, {\it Trans.
 Amer. Math. Soc.}, {\bf 361}, 5983--6017 (2009).

\bibitem{CM-Book}
 {\sc A. Cambini, L. Martein}. ``Generalized Convexity and Optimization:
 Theo\-ry and Applications''. Springer, (2009).

%\bibitem{C}
% {\sc J.P. Crouzeix}, Characterizations of generalized convexity and
% generalized monotonicity, a survey. In: J.P. Crouzeix et al. (eds.):
% Generalized Con\-ve\-xi\-ty, Generalized Monotonicity. pp. 237--256,
% Kluwer Academics Publishers, (1998).

\bibitem{GOM}
 \textsc{X. Goudou, J. Munier},  The gradient and heavy ball with friction 
 dynamical systems: the quasiconvex case, {\it Math. Programm.}, {\bf 116}, 
 173--191, (2009).

\bibitem{GLM-1}
 {\sc S.-M. Grad, F. Lara, R.T. Marcavillaca}, Relaxed-inertial proximal point type algorithms for quasiconvex minimization, {\it J. of Global Optim.}, {\bf 85}, 3, 615--635, (2023).

\bibitem{HKS}
 {\sc N. Hadjisavvas, S. Komlosi, S. Schaible}, ``Handbook of Generalized 
 Convexity and Generalized Monotonicity''. Springer-Verlag, Boston, (2005).

\bibitem{ILMY}
 {\sc A. Iusem, F. Lara, R.T. Marcavillaca, L.H. Yen}, A two-step pro\-xi\-mal 
 point algorithm for nonconvex equilibrium problems with applications to 
 fractional programming. {\it J. Global Optim.}, DOI: 10.1007/s10898-024-01419-8, (2024).

\bibitem{JO3}
\textsc{M. Jovanovi\'c}, Strongly quasiconvex quadratic functions, {\it Publ. Inst. Math., Nouv. S\'er.}, {\bf 53}, 153--156, (1993).

\bibitem{KL-2} 
{\sc A. Kabgani, F. Lara}, Strong subdifferentials: theory and
applications in nonconvex optimization, {\it J. Global Optim.}, {\bf 84},
 349--368, (2022).

\bibitem{Ka-Nu-Sc}
 {\sc H. Karimi, J. Nutini, M. Schmidt}, Linear convergence of gradient and
 proximal-gradient methods under the Polyak-{\L}ojasiewicz condition.
 In ``Machine Learning and Knowledge Discovery in Databases", pages 
 795--811. Springer, (2016).

\bibitem{KK}
 {\sc G. Kassay, J. Kolumb\'an}, Multivalued parametric variational
 inequa\-li\-ties with $\alpha$-pseudomonotone maps, {\it J. Optim.
 Theory Appl.}, {\bf 107}, 35--50, (2000).

 \bibitem{K-1980}
 {\sc A.I. Korablev}, Relaxation methods of minimization of pseudoconvex functions, {\it J. Soviet Math.}, {\bf 44}, 1--5, (1989) (translated from {\it Issled Prikl Mat.}, {\bf 8}, 3--8, (1980)).

\bibitem{Lara-9}
 {\sc F. Lara}, On strongly quasiconvex functions: existence results and 
 proximal point algorithms, {\it J. Optim. Theory Appl.}, {\bf 192}, 891--911, (2022).

\bibitem{LM}
{\sc F. Lara, R.T. Marcavillaca}, Bregman proximal point type algorithms for quasiconvex minimization, {\it Optimization}, {\bf 73}, 497--515, (2024).

\bibitem{L}
 {\sc S. {\L}ojasiewicz}, A topological property of real analytic subsets.
 Coll. du CNRS, Les \'equations aux d\'eriv\'ees partielles, {\bf 117},
 87--89, (1963).

\bibitem{MM}
 {\sc Y. Malitsky, K. Mishchenko}, Adaptive gradient descent without
 descent, {\it Proc. 37th Int. Conf. Mach. Learning, PMLR 119}, (2020).

\bibitem{NP}
 {\sc I. Necoar\u{a}, Y. Nesterov, F. Glineur}, Linear convergence of first-order 
 methods for non-strongly convex optimization, {\it Math. Program.},
 {\bf 175}, 69--107, (2019).

\bibitem{Nesterov-book}
 {\sc Y. Nesterov}, `` Lectures on convex optimization''. Springer, Berlin, 
 (2018).

\bibitem{P1}
 {\sc B.T. Polyak}, Gradient methods for minimizing functionals, {\it Zh.
 Vychisl. Math. Mat. Fiz.}, {\bf 3}, 643--653, (1963).

\bibitem{P2}
 {\sc B.T. Polyak}, Some methods of speeding up the convergence of
  iteration methods, {\it USSR Comp. Math. and Math. Phys.}, {\bf 4(5)},
  1--17, (1964).

\bibitem{P}
 \textsc{B.T. Polyak}, Existence theorems and convergence of minimizing
 sequences in extremum problems with restrictions, \textit{Soviet Math.},
 \textbf{7}, 72--75, (1966).

\bibitem{PS}
 \textsc{B.T. Polyak, P. Shcherbakov}, Lyapunov functions: An optimization
 theory perspective, \textit{IFAC-PapersOnLine}, {\bf 50}, 7456--7461,
 (2017).

\bibitem{PK}
 {\sc M. Rahimi Piranfar, H. Khatibzadeh}, Long-Time behavior of a
 gradient system governed by a quasiconvex function, {\it J. Optim. Theory 
 Appl.}, {\bf 188}, 169--191, (2021).

\bibitem{rock-1980}
 {\sc R.T. Rockafellar}, Generalized directional derivatives and
 subgradients of nonconvex functions, {\it Can. J. Math.}, {\bf 32},
 257--280, (1980).

%\bibitem{V2}
% {\sc J.P. Vial}, Strong convexity of sets and functions, {\it J. Math. Econ.},
% {\bf 9}, 187--205, (1982).

\bibitem{VNC-2}
 {\sc A.A. Vladimirov, Ju.E. Nesterov, Ju.N. Chekanov}, O ravnomerno 
 kvazivypuklyh funkcionalah [On uniformly quasiconvex functionals], {\it 
 Vestn. Mosk. un-ta, vycis. mat. i kibern.}, {\bf 4}, 18--27, (1978) (In Russian). 

\bibitem{Vu}
 {\sc P.T. Vuong}, A second-order dynamical system and its discretization
 for strongly pseudo-monotone variational inequality, {\it SIAM J. Control
 Optim.}, {\bf 59}, 2875--2897, (2021).

\end{thebibliography}
\end{document}